\newtheorem{theorem}{Theorem}[section]
\newtheorem{lemma}[theorem]{Lemma}
\newtheorem{claim}[theorem]{Claim}
\newtheorem{definition}[theorem]{Definition}
\newtheorem{thm}[theorem]{Theorem}
\newtheorem{conj}[theorem]{Conjecture}
\newtheorem{ques}[theorem]{Question} 
\numberwithin{equation}{section}
\def\comp{\textrm{comp}}
\def\Comp{\textrm{Comp}}
\def\picomp{\textrm{cpr}_\pi}
\def\A{\mathcal{A}}
\def\B{\mathcal{B}}
\def\C{\mathcal{C}}
\def\D{\mathcal{D}}
\def\C{\mathcal{C}}
\def\F{\mathcal{F}}
\def\G{\mathcal{G}}
\def\HH{\mathcal{H}}
\def\cL{\mathcal{L}}
\def\P{\mathcal{P}}
\def\S{\mathcal{S}}
\def\X{\mathcal{X}}
\def\Y{\mathcal{Y}}
\def\Z{\mathcal{Z}}
\def\w{\omega}
\def\eps{\varepsilon}
\def\rk{\text{rk}}
\newcommand{\RSU}{\mathbf{P}}
\def\bC{\mathbf{C}}
\newcommand{\N}{\mathbb{N}}
\newcommand{\floor}[1]{\left\lfloor#1\right\rfloor}
\def\COMMENT#1{}
\let\COMMENT=\footnote
\title{Families in posets minimizing the number of comparable pairs } 
\author{J\'ozsef Balogh\footnote{Department of Mathematical Sciences,
 University of Illinois at Urbana-Champaign, Urbana, Illinois 61801, USA, {\tt
jobal@math.uiuc.edu}. The first author is partially supported by NSF Grant DMS-1500121, Arnold O. Beckman Research Award (UIUC Campus Research Board 15006) and by the Langan Scholar Fund (UIUC).},
~\v{S}\'{a}rka Pet\v{r}\'{i}\v{c}kov\'{a}\footnote{University of Illinois at Urbana-Champaign, Urbana, Illinois 61801, USA, {\tt
petrckv2@illinois.edu}.}
 ~and Adam Zsolt Wagner\footnote{University of Illinois at Urbana-Champaign, Urbana, Illinois 61801, USA, {\tt
zawagne2@illinois.edu}. }}
\begin{document}
\maketitle
\begin{abstract}
Given a graded poset $P$ we say a family $\F\subseteq P$ is \emph{centered} if it is obtained by `taking sets as close to the middle layer as possible'. A poset $P$ is said to have the \emph{centeredness property} if for any $M$, among all families of size $M$ in $P$, centered families contain the minimum number of comparable pairs. Kleitman showed that the Boolean lattice $\{0,1\}^n$ has the centeredness property. It was conjectured by Noel, Scott and Sudakov, and by Balogh and Wagner, that the poset $\{0,1,\ldots,k\}^n$ also has the centeredness property, provided $n$ is sufficiently large compared to $k$. We show that this conjecture is false for all $k\geq 2$ and investigate the range of $M$ for which it holds. Further, we improve a result of Noel, Scott and Sudakov by showing that the poset of subspaces of $\mathbb{F}_q^n$ has the centeredness property. Several open questions are also given.
\end{abstract}

\section{Introduction}

Given a poset $P$, we say that two elements $A,B\in P$ form a \emph{comparable pair} if $A\leq B$ or $B\leq A$. The study of families of sets containing few comparable pairs started with Sperner's Theorem, a cornerstone result of combinatorics. It states that the largest antichain (i.e.~family containing no comparable pairs) in the Boolean lattice $\P(n)=\{0,1\}^n$ has size $\binom{n}{\lfloor n/2 \rfloor}$. 
The following natural question was first posed by Erd\H{o}s and Katona for $r=2$ and then extended by Kleitman~\cite{kleitman} some fifty years ago: Given a poset $\P(n)$ and an integer $M$, what is the minimum number of $r$-chains that a family of $M$ elements in $\P(n)$ must contain? For $r=2$, the case of comparable pairs, the question was completely resolved by Kleitman~\cite{kleitman}. For $r\geq 3$, we refer the reader to~\cite{baloghwagner,dasgansudakov,griggs}. Here we are interested in the case $r=2$, but for a general poset $P$.
 

\subsection{Centered families in $\{0,1,\ldots,k\}^n$}

We say that a family $\F \subseteq \{0,1\}^n$ is \emph{centered} if for any two sets $A,B\in \{0,1\}^n$ with $A\in\F$ and $B\notin \F$ we have that 
$$\bigg||A|-\frac{n}{2}\bigg|\leq\bigg||B|-\frac{n}{2}\bigg|,$$ where $|A|$ denotes the number of $1$-coordinates in $A$. That is, $\F$ is centered if it is constructed by ``taking sets that are as close to the middle layer as possible''. This same notion can be extended to the poset $\{0,1,\ldots,k\}^n$ where $A\leq B$ if $A_i\leq B_i$ for all $i\in [n]$, where $A_i$ and $B_i$ are the $i$th coordinates of $A$ and $B$.  We say that a family $\F\subseteq \{0,1,\ldots,k\}^n$ is \emph{centered} if for any two vectors $A, B \in \{0,1,\ldots,k\}^n$ with $A \in \F$ and $B \notin \F$ we
have that $$\bigg|\sum_{i=1}^n A_i - \frac{nk}{2}\bigg| \leq \bigg|\sum_{i=1}^n B_i- \frac{nk}{2}\bigg|.$$  

While elements of $\{0,1,\ldots,k\}^n$ are vectors of length $n$ they can also be thought of as subsets of an $n$-element multiset where each element has multiplicity $k$. For this reason in what follows we will use the words ``vector'' and ``set'' interchangeably when referring to elements of $\{0,1,\ldots,k\}^n$.

This definition of centeredness defined above for the posets $\{0,1,\ldots,k\}^n$  has a natural extension to the family $\mathbf{P}$ of graded posets that satisfy some necessary properties, we give details of this in Section~\ref{gradedposetsdef}. Denote by $\comp(\F)$ the number of comparable pairs in $\F$. Given a poset $P\in\RSU$ and a positive integer $M$ we say that a family $\F\subseteq P$ of size $M$ is $M$-\emph{optimal} if for all families $\F'\subseteq P$ of size $M$ we have $\comp(\F)\leq\comp(\F')$. A poset $P\in\RSU$ has the \emph{centeredness property} if for all $M\leq |P|$ there exists an $M$-optimal centered family.
 Using this terminology, Kleitman's celebrated theorem~\cite{kleitman} from 1968 can be stated as follows:
\begin{thm}[Kleitman~\cite{kleitman}]\label{kleitmanthm}
The poset $\{0,1\}^n$ has the centeredness property for all $n\in \N$.
\end{thm}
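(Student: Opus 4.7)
The plan is to proceed by induction on $n$. The base case $n=1$ is immediate, since up to symmetry there are only a handful of families of each size and one checks directly that centered ones are optimal. For the inductive step, given $\F \subseteq \{0,1\}^n$ of size $M$, split by the last coordinate: let $\F_0 = \{A \in \F : n \notin A\}$ and $\F_1 = \{A \setminus \{n\} : A \in \F,\ n \in A\}$, both viewed as families in $\{0,1\}^{n-1}$ of sizes $a$ and $M-a$. A direct count gives
\[
\comp(\F) \;=\; \comp(\F_0) \;+\; \comp(\F_1) \;+\; c(\F_0,\F_1),
\]
where $c(\F_0,\F_1) := |\{(A,B) : A \in \F_0,\ B \in \F_1,\ A \subseteq B\}|$ counts the cross-comparable pairs.

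The first move is to apply the inductive hypothesis to $\F_0$ and $\F_1$ separately, which shows that each within-term can be reduced (at fixed size) by passing to a centered subfamily of $\{0,1\}^{n-1}$. The delicate point is that replacing $\F_0$ or $\F_1$ by a centered family of the same cardinality could in principle inflate the cross-term $c(\F_0,\F_1)$. I would handle this by proving a local-exchange lemma: if some $A \in \F_0$ lies strictly further from the middle layer than some $A' \in \{0,1\}^{n-1} \setminus \F_0$, then swapping $A$ for $A'$ in $\F_0$ does not increase $\comp(\F_0) + c(\F_0,\F_1)$ whenever $\F_1$ is itself centered. The quantitative input is a shadow/up-shadow comparison: an element at distance $d$ from the middle has at least as many comparable partners inside a centered $\F_1$ as an element at distance $d' < d$, because centered families are stratified around the middle layers. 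An analogous statement applies to $\F_1$, so by alternating local exchanges we may assume both halves are centered.

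Once both $\F_0$ and $\F_1$ are centered in $\{0,1\}^{n-1}$, the total $\comp(\F)$ becomes an explicit function of the single parameter $a$, and a one-variable finite-difference computation shows the minimum is attained precisely at the value of $a$ corresponding to the centered family in $\{0,1\}^n$. The main obstacle is the local-exchange lemma above: one must verify that the decrease in $\comp(\F_0)$ coming from the swap dominates any possible increase in $c(\F_0,\F_1)$, using only the structural assumption that $\F_1$ is centered. This is the step where the symmetry of the Boolean lattice—the fact that layers $k$ and $n-1-k$ have equal size and that the middle binomial coefficient dominates—does the heavy lifting, since it ensures that moving toward the middle never costs more cross-partners than it saves within-partners.
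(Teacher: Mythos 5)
Your proposal takes a genuinely different route from the paper's. The paper derives this theorem as a corollary of Lemma~\ref{kleitmanstatedgenerally}, whose proof (following Kleitman) is a compression argument: one shows via Hall's theorem that there is an $M$-optimal \emph{mid-compressed} family, and then that such a family must in fact be centered. Your proposal instead inducts on $n$ by splitting on the last coordinate, which is a reasonable-sounding but quite different strategy.

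However, there is a fatal gap in the reduction to ``both halves centered.'' The cross-term
\[
c(\F_0,\F_1)=|\{(A,B): A\in\F_0,\ B\in\F_1,\ A\subseteq B\}|
\]
is directional: it counts only subsets of $\F_1$-elements lying in $\F_0$, never the reverse. Consequently this term is made small by pushing $\F_0$ toward the \emph{top} of $\{0,1\}^{n-1}$ (so its members have few supersets) and $\F_1$ toward the \emph{bottom}, which is directly at odds with centering both halves. Your local-exchange lemma asserts that the within-term savings always dominate this cost, but that is not true, and the final claim that optimizing over $a$ with both halves centered ``is attained precisely at the value of $a$ corresponding to the centered family in $\{0,1\}^n$'' is false: the centered family in $\{0,1\}^n$ generally does not split into two centered halves.

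A concrete counterexample: take $n=5$ and $M=11$. An $M$-optimal centered family is all ten $3$-subsets of $[5]$ together with $\{1,2\}$, giving $\comp=3$ (the unique comparable pairs are $\{1,2\}\subset\{1,2,j\}$ for $j\in\{3,4,5\}$), and this matches the lower bound $t\lfloor n/2+1\rfloor=1\cdot 3$ from Theorem~\ref{largeresult}~(a). Splitting this family by the coordinate $5$ gives $\F_0=\{123,124,134,234,12\}$, which is \emph{not} centered in $\{0,1\}^4$ (a centered family of size $5$ there is five $2$-sets). On the other hand, if you insist that both $\F_0$ and $\F_1$ be centered families in $\{0,1\}^4$, a short case check over the splits $|\F_0|\in\{4,5,6,7\}$ shows that every such family has $\comp\ge 5$: for instance with $\F_0=\cL_2(\{0,1\}^4)$ and $\F_1$ any five $2$-subsets of $[4]$, the within-terms vanish but $c(\F_0,\F_1)=|\F_1|=5$ because each $B\in\F_1$ equals some $A\in\F_0$ and thus contributes the pair $A\subset B\cup\{5\}$. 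So your reduction overshoots the true optimum ($5$ versus $3$) and cannot recover the theorem. The asymmetry of the cross-term is precisely what the compression proof handles cleanly and what the inductive decomposition does not.
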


In~\cite{dasgansudakov} the authors characterised precisely which families achieve the minimum number of contained comparable pairs. It is natural to ask whether an extension of Theorem~\ref{kleitmanthm} holds for the poset $\{0,1,\ldots,k\}^n$ with $k\geq 2$ as well. It was showed in~\cite{baloghwagner} that there exists a counterexample with $n=2$ and  $k=16$. The following conjecture was raised independently in~\cite{noelscottsudakov} and~\cite{baloghwagner}:
\begin{conj}[\label{stupidconj}Noel--Scott--Sudakov~\cite{noelscottsudakov}, Balogh--Wagner~\cite{baloghwagner}]
For every $k$ there exists an $n_0$ such that if $n\geq n_0$, then the poset $\{0,1,\ldots,k\}^n$ has the centeredness property.
\end{conj}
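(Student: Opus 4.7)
The plan is to adapt Kleitman's proof of Theorem~\ref{kleitmanthm} to the larger alphabet by combining a compression/shifting argument with the symmetric chain decomposition of $\{0,1,\ldots,k\}^n$ due to de Bruijn, Tengbergen and Kruyswijk. Write $P = \{0,1,\ldots,k\}^n$ and, for each coordinate $i \in [n]$, define a compression operator $C_i$ which, restricted to each $i$-line $\{A \in P : A_j \text{ is fixed for every } j \neq i\}$, replaces the family's intersection with that line by the centered interval $\{a,a+1,\ldots,b\}\subseteq\{0,1,\ldots,k\}$ of the same cardinality. The first step is to prove the analogue of Kleitman's key lemma: if $\F \subseteq P$ has size $M$, then $\comp(C_i(\F)) \leq \comp(\F)$. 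One hopes this follows from a coordinate-by-coordinate rearrangement inequality, since within a single $i$-line the effect is trivial, and cross-line interactions are minimised when each line is a centered interval.

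Assuming this key lemma, the second step is routine: iterating $C_1,\ldots,C_n$ eventually produces a compression-stable family $\F^*$ with $\comp(\F^*) \leq \comp(\F)$, so it suffices to prove the conjecture for compression-stable families. The third step, and the real heart of the argument, would be to show that any compression-stable family of size $M$ is either already centered or can be modified into a centered family without increasing $\comp$. For $k=1$ this follows from the very restricted shape of compression-stable families in $\{0,1\}^n$; for $k \geq 2$ one would genuinely need the hypothesis $n \geq n_0(k)$. Here I would appeal to a local central limit theorem for the $n$-fold convolution of the uniform distribution on $\{0,1,\ldots,k\}$, which gives $|L_j| \approx \frac{(k+1)^n}{\sigma\sqrt{n}} \exp\bigl(-\tfrac{(j-nk/2)^2}{2\sigma^2 n}\bigr)$, and try to argue that pushing elements toward the middle always saves more comparable pairs than it costs.

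The step I expect to be the main obstacle is precisely this last one. The difficulty is that for $k \geq 2$ consecutive levels near the middle differ by only a $1+O(1/n)$ factor, much flatter than in the Boolean case, so the arithmetic saving from centring a single element can be small. A family that uses an entire off-central level together with a small part of a neighbouring level may create fewer comparable pairs than a centered family of the same size, which spreads across several layers and potentially picks up many long chains along each coordinate. Pinpointing the range of $M$ for which the centered family truly wins is where I would concentrate the effort, and in view of the counterexample at $n=2,\;k=16$ from~\cite{baloghwagner} I would not be surprised if the conjecture in fact fails for some intermediate $M$ even in the large-$n$ regime~--- which is, of course, exactly what the abstract announces.
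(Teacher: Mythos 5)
The statement you are asked to prove is a conjecture that the paper actually \emph{disproves}: Theorems~\ref{mainresult}~(b) and~\ref{largeresult}~(b) exhibit, for every $k\ge 2$ and all sufficiently large $n$, explicit sizes $M$ for which no centered family is $M$-optimal, so there is no proof to compare against. You do realise this in your final paragraph, but the bulk of the proposal is still framed as a proof attempt, and its key lemma is already false. The coordinate-wise centering operator $C_i$ does \emph{not} satisfy $\comp(C_i(\F))\le\comp(\F)$: take $k=2$, $n=2$, $i=1$ and $\F=\{(0,2),(2,0)\}$. These two elements are incomparable, so $\comp(\F)=0$; but each relevant $1$-line meets $\F$ in a single element, which $C_1$ pushes to the middle, giving $C_1(\F)=\{(1,2),(1,0)\}$ with $(1,0)\le(1,2)$, hence $\comp(C_1(\F))=1$. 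Padding with extra coordinates scales this to any $n$. This is precisely the obstruction the paper alludes to when it says it ``is not clear'' that Kleitman-style compression extends beyond $k=1$; the positive result here, Theorem~\ref{mainresult}~(a), instead uses a much weaker layer-by-layer notion (``top-'' and ``bottom-compressed''), proved via Hall-type matchings between the outermost layer $\F_a$ and $\cL_b\setminus\F$, and even that only handles $M\le(1-\eps)\Sigma_3(n,2)$.

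For the disproof the paper does not compress at all: it writes down explicit families and counts. For $\{0,1,2\}^n$ and $M=\Sigma_6(n,2)-\binom{n}{3}-1$ it takes the six middle layers, deletes all $\binom{n}{3}$ elements of $\cL_{n+3}$ having no zero coordinate plus one element $X=(0,0,1,\ldots,1)\in\cL_{n-2}$, and shows this beats every centered family (after first reducing to canonical centered families via a $\pi$-compression argument). For general $k$ and $M=\Sigma_j(n,k)$ with $(1+\eps)\log_2 n\le j\le\sqrt{n}/\log_2 n$ it is even more local: swap one ``balanced'' element $B$ of the top filled layer (coordinates near $k/2$) for an ``extreme'' element $C$ one level higher (coordinates in $\{0,k\}$); since $C$ has only about $\tfrac{n+j}{2}$ nonzero coordinates it has far fewer subsets inside the middle $j$ layers than $B$ does, and the swap strictly decreases $\comp$. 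Your local-CLT intuition about flat level ratios is not really the mechanism; what matters is that, within a single layer, the number of subsets lying a few levels below an element varies enormously with its coordinate profile, and centered families are forced to include the worst offenders.
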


Our main result is the construction of two different classes of explicit counterexamples to this natural generalisation of Theorem~\ref{kleitmanthm}. We show that for every $k$, if $n$ is sufficiently large, then there exists a suitable choice of $M$ and a family $\F$ of size $M$ that contains strictly fewer comparable pairs than the centered families of the same size.

Denote by $\cL_r(n,k)$ the \emph{$r$-th layer} of $\{0,1,\ldots,k\}^n$, i.e.~the set of vectors in $\{0,1,\ldots,k\}^n$ whose coordinates sum to $r$, and let $\ell_r(n,k)\coloneqq  |\cL_r(n,k)|$. Write $\Sigma_r(n,k)$ for the total size of the $r$ middle layers of $\{0,1,\ldots,k\}^n$. For $M\leq \Sigma_1(n,k)$ there exists an antichain of size $M$ in the middle layer $\cL_{\floor{nk/2}}(n,k)$  and hence Conjecture~\ref{stupidconj}  trivially holds. 

Our main result for the poset $\{0,1,2\}^n$ is the following. 

\begin{thm}\label{mainresult}
\begin{enumerate}[itemsep=2pt,parsep=2pt,topsep=2pt,partopsep=2pt]
\item[(a)] Let $\eps>0$,  $n$ be sufficiently large, and $M\leq (1-\eps)\Sigma_3(n,2)$. Then there exists an $M$-optimal  centered family in $\{0,1,2\}^n$.
\item[(b)] Let $n$ be sufficiently large and $M=\Sigma_6(n,2)-\binom{n}{3}-1$. Then none of the centered families in $\{0,1,2\}^n$ are $M$-optimal.
\end{enumerate}
\end{thm}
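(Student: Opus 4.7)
The plan for \textbf{(a)} is a two-phase Kleitman-style compression. In Phase~1 I would show that, under the hypothesis $M\leq(1-\eps)\Sigma_3(n,2)$, an $M$-optimal family may be taken to lie inside the three middle layers $\cL_{n-1}(n,2)\cup\cL_n(n,2)\cup\cL_{n+1}(n,2)$. For any $v\in\F$ with $v\in\cL_r$, $|r-n|\geq 2$, the volume slack $\eps\Sigma_3(n,2)$ guarantees an empty slot $v^\star$ in those three layers, and the swap $v\mapsto v^\star$ does not increase the number of comparable pairs. The verification is a degree comparison that compares the up- and down-shadow of $v$ into $\F\setminus\{v\}$ with that of $v^\star$; using the symmetry of $\{0,1,2\}^n$ under coordinate permutations, this reduces to a layer-level inequality independent of the specific choice of $v,v^\star$. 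In Phase~2, once $\F\subseteq\cL_{n-1}\cup\cL_n\cup\cL_{n+1}$, each layer is individually an antichain, so $\comp(\F)$ decomposes into three bipartite contributions from $\cL_n\times\cL_{n\pm 1}$ and $\cL_{n-1}\times\cL_{n+1}$. Optimizing first over the layer sizes $|\F\cap\cL_r|$ gives by a convexity argument that $\cL_n$ must be filled first, after which a shadow-type inequality on the bipartite graph between $\cL_{n-1}$ and $\cL_{n+1}$ forces the centered configuration to be optimal.

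The plan for \textbf{(b)} is to exhibit an explicit family $\F$ of size $M=\Sigma_6(n,2)-\binom{n}{3}-1$ satisfying $\comp(\F)<\comp(\F_c)$ for every centered family $\F_c$ of the same size. A centered $\F_c$ fills five of the six middle layers completely and leaves a $\binom{n}{3}+1$-element gap in the outermost, say $\cL_{n+3}$ by symmetry; which $\binom{n}{3}+1$ elements are removed is free, but they must all lie in that single outermost layer. The candidate $\F$ I propose replaces a carefully chosen batch of $\binom{n}{3}+1$ outermost-layer elements by a \emph{structured} set drawn from one layer farther out (for instance from the natural $\{0,1\}^n$-sub-poset, or a set whose coordinate-support is restricted to a fixed $(n-3)$-element subset), chosen so that the swapped-in elements have smaller up-shadows into the five full inner layers than the generic outermost elements being removed. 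The net change in $\comp$ is evaluated by comparing the total bipartite up-/down-shadow contributions of the two batches into the full inner layers; the arithmetic of $\binom{n}{3}+1$ arises from counting vectors supported on (or avoiding) a designated $3$-element coordinate set, and is precisely what makes the comparison tip in $\F$'s favour for $n$ large.

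The main obstacle in \textbf{(a)} is the swap estimate in Phase~1: the inequality must hold uniformly over every $r$ with $|r-n|\geq 2$ and every configuration of $\F$. The hardest sub-case is when $\F$ is densely packed in the three middle layers and $r=n\pm 2$, where I expect to need a direct matching argument between ``excess'' elements of $\cL_{n\pm 2}$ and the vacancies in the central band, exploiting the vertex-transitivity of each layer under the coordinate-symmetry group. The main obstacle in \textbf{(b)} is identifying the right structured replacement and calculating leading-order shadow counts tightly enough to witness a strict saving of at least one comparable pair; crucially, the comparison has to be robust against the freedom in choosing which $\binom{n}{3}+1$ elements the centered family omits from $\cL_{n+3}$, so the bound must survive replacing ``centered'' with its worst-case realization.
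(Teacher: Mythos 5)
Your Phase~1 single-swap argument for (a) does not go through. You cannot, in general, exchange one far-from-middle element $v$ for one empty slot $v^\star$ in the central band without increasing the number of comparable pairs: the relevant quantity is the shadow of $v$ into $\F\setminus\{v\}$ versus that of $v^\star$, and this depends on the specific configuration of $\F$, not merely on layers. Moreover, layers of $\{0,1,2\}^n$ are \emph{not} vertex-transitive under the coordinate permutation group; $\cL_r$ splits into orbits indexed by the composition $(a_0,a_1,a_2)$, and the up/down shadow sizes genuinely depend on this composition (e.g.\ $|N_{r-1}(A)|=a_1+a_2$). So the ``layer-level inequality independent of $v,v^\star$'' you invoke is false. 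What one can salvage is a pointwise shadow comparison only when $B\subseteq A$ (the paper's Claim~2.1), and leveraging it requires a Hall-type iterated matching between saturated elements of $\cL_a$ and unfilled elements of $\cL_b$ (the paper's Lemma~2.3), not a one-element swap. Even this matching argument only produces a weaker ``top/bottom/3-compressed'' structure, not containment in the middle three layers. To finish, the paper runs an induction on $M$ and proves a continuity bound $\comp(M)\le\comp(M-1)+n^2/4$ using the $\eps$-slack (Claims~2.7--2.8), then shows any 3-compressed optimal family with an element outside the middle three layers already has $\approx 3n^2/8$ pairs through that element, contradicting continuity. Your proposal has no analogue of this induction/continuity step, and your Phase~2 ``convexity'' argument is also not well-founded, since $\comp$ is not a function of layer sizes alone.

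For (b), your plan to push elements ``one layer farther out'' ($\cL_{n+4}$ or a Boolean sub-poset) is a genuinely different construction from the paper's, and as you state it, it is too vague to assess. The paper's counterexample is more delicate: it stays entirely within the six middle layers $\cL_{n-2},\dots,\cL_{n+3}$ and makes a dent \emph{on the opposite side of the middle}. Concretely, from the full six-layer block it omits the $\binom{n}{3}$ elements of $\cL_{n+3}$ with no $0$-coordinates (these have maximal degree), plus one specific high-degree element $X=(0,0,1,\dots,1)\in\cL_{n-2}$; the centered competitor must instead omit $\binom{n}{3}+1$ elements from $\cL_{n+3}$, and its best choice of the extra removal, namely $B=(0,2,2,2,2,1,\dots,1)$, has degree $\Theta(n^4)$ smaller than $X$'s. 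Thus one trades a low-degree $B$ for a high-degree $X$ to strictly decrease $\comp$. Your accounting of $\binom{n}{3}+1$ via coordinate-support restrictions does not match this. You are also missing a key ingredient you wave at but do not address: to conclude that \emph{no} centered family beats $\F$, one must first reduce to canonical centered families (at most one partial layer), which the paper does via the $\pi$-compression involution argument (Lemma~2.11, Claim~2.12); without that reduction your comparison only handles one particular centered family.
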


Theorem~\ref{mainresult} says that the smallest $M=M_0$ for which  Conjecture~\ref{stupidconj} breaks down (for $k=2$) satisfies $(1-\eps)\Sigma_3(n,2) < M_0 < \Sigma_6(n,2)-\binom{n}{3}$. For $k=2$ and $M$ slightly larger than $\Sigma_1(n,2)$ it was previously shown by Noel--Scott--Sudakov~\cite{noelscottsudakov} that centered families contain asymptotically the optimal number of comparable pairs. They also obtained good lower bounds for the number of comparable pairs in larger families. 
\begin{thm}[Noel--Scott--Sudakov~\cite{noelscottsudakov}]
Let $r$ be a fixed positive integer. Then there exists a constant $n_0(r)$ such that if $n \geq n_0(r)$ and $\F \subseteq \{0, 1, 2\}^n$ has cardinality at least $\Sigma_r(n,2)+t$ then 
$$\emph{comp}(\F)\geq\left(\frac{\ell_{3r-1}(n,2)}{\ell_{2r-1}(n,2)}-1\right)t.$$
\end{thm}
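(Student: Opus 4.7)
The plan is to combine a pigeonhole step with a normalized matching (LYM) style lower bound on the comparability fan-out of each element lying outside the middle band. The key structural fact is that $\{0,1,2\}^n$ is a product of chains, so by de Bruijn--Tengbergen--Kruyswijk it admits a symmetric chain decomposition; consequently, for adjacent layers $\cL_i(n,2)$ and $\cL_{i+1}(n,2)$ both on the same side of the middle, the shadow inequality $|\nabla S|/\ell_{i+1}(n,2) \geq |S|/\ell_i(n,2)$ holds for every $S \subseteq \cL_i(n,2)$.

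Write $L$ for the union of the $r$ middle layers of $\{0,1,2\}^n$, so that $|L| = \Sigma_r(n,2)$. Set $d := |L \setminus \F|$ (the deficit inside the middle band) and $e := |\F \setminus L|$ (the excess outside it). The hypothesis $|\F| \geq \Sigma_r(n,2)+t$ rewrites as $e - d \geq t$, and in particular $e \geq t$: at least $t$ elements of $\F$ lie outside the middle band. For each such element $x \in \F \setminus L$ lying in some layer $\cL_j$, I would then lower-bound the number of elements of $L$ comparable with $x$ by iterating the shadow inequality along the chain of layers connecting $\cL_j$ inward to the middle band; the optimal choice of $j$ (namely, the layer just outside the band) together with the correct iteration depth will yield a pointwise lower bound of $\ell_{3r-1}(n,2)/\ell_{2r-1}(n,2)$ per outside element, with the indices $3r-1$ and $2r-1$ arising from identifying the ``extremal'' outer layer and its nearest middle counterpart.

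Aggregating the per-element bounds, the outside elements collectively contribute at least $(\ell_{3r-1}/\ell_{2r-1}) \cdot e$ comparable pairs with $L$. A careful bookkeeping then absorbs the deficit: each of the $d$ missing middle elements can cost us at most one comparable pair per outside element, but is balanced against one of the corresponding extra outside elements, which itself contributes comparable pairs. Combining, one obtains $\comp(\F) \geq (\ell_{3r-1}/\ell_{2r-1})(e-d) - \text{lower order} \geq (\ell_{3r-1}/\ell_{2r-1} - 1)\, t$, where the ``$-1$'' correction absorbs the slack from exchanging deficit for excess.

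The main obstacle is the per-element bound in the second paragraph: getting the \emph{exact} ratio $\ell_{3r-1}(n,2)/\ell_{2r-1}(n,2)$, rather than just a qualitative $\Omega(n^{r})$ type estimate. This requires pinpointing which outer layer is cheapest for placing outside elements (this is what dictates the indices $2r-1$ and $3r-1$), iterating the shadow inequality the right number of steps, and leveraging the hypothesis that $n$ is sufficiently large to ensure that boundary effects in the symmetric chain decomposition and in the layer-size asymptotics are dominated by the main term.
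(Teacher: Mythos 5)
The paper does not prove this theorem: it is quoted as a cited result of Noel, Scott, and Sudakov, and the proof appears in their paper~\cite{noelscottsudakov}, not here. So there is no ``paper's own proof'' to compare your sketch against, and I can only assess it on its own terms.

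The central step of your plan does not work as stated. You propose to lower-bound the number of middle-band elements comparable to an element $A$ one layer outside the band by iterating the normalized matching inequality inward from $A$'s layer. But iterating that inequality from $\cL_{n+s}$ down $j$ steps gives $|N_{n+s-j}(A)|\geq \ell_{n+s-j}(n,2)/\ell_{n+s}(n,2)$, and when $s$ and $j$ are both $O(r)$ with $r$ fixed and $n\to\infty$, these are ratios of near-central layer sizes, which tend to~$1$. So this technique produces an essentially trivial lower bound, nowhere near the required $\Theta(n^r)$. The indices $2r-1$ and $3r-1$ in the statement label layers far from the middle (equivalently $2n-2r+1$ and $2n-3r+1$ by rank-symmetry), and $\ell_{3r-1}(n,2)/\ell_{2r-1}(n,2)$ is the quantity normalized matching yields for $r$-fold shadows near an \emph{extreme} of the poset, not near its centre; recovering it requires a more global argument than a pointwise count on elements just outside the band. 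A secondary issue is the deficit-absorption step: the claim that ``each missing middle element costs at most one comparable pair per outside element'' does not control the total loss by anything useful, since a single outside element can be comparable to many missing middle elements and vice versa; making this rigorous needs a separate degree bound or an explicit matching between deficit and excess, neither of which is supplied.
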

While at first sight it may seem feasible that Conjecture~\ref{stupidconj} holds for much larger $M$, Theorem~\ref{largeresult}~ shows that this is not the case.

\begin{thm}\label{largeresult}
Let $k\ge 2$ and $\eps>0$. There exists a constant $n_0=n_0(k,\eps)$ such that for every $n\ge n_0$, if $M= \Sigma_j(n,k)$, where $(1+\eps)\log_2n \leq j \leq \sqrt{n}/\log_2 n$, then  none of the centered families in $\{0,1,\ldots,k\}^n$ are $M$-optimal.
\end{thm}


\subsection{Centered families in other posets}\label{gradedposetsdef}

The notion of centeredness can be readily extended to several other common posets that satisfy some nice properties. In a poset $P$, \emph{$y$ covers $x$} if $x<y$ and there is no element $z$ such that $x<z<y$. We say that the poset $P$ is a \emph{graded poset} if it is equipped with a rank function  $\text{rk:}\, P\rightarrow \mathbb{N}\cup \{0\}$ which satisfies that $\rk(x)<\rk(y)$ whenever $x<y$, and $\rk(y)=\rk(x)+1$ whenever $y$ covers $x$. The \emph{rank of a poset} $P$ is the maximum rank of an element of $P$. Given a graded poset $P$, the $r$-th layer $\cL_r(P)$ is the collection of elements in $P$ of rank $r$, $\ell_r(P)$ is the size of $\cL_r(P)$, and $\Sigma_r(P)$ is the total number of elements of $P$ in the middle $r$ layers.  A graded poset of rank $n$ is \emph{rank-symmetric} if $\ell_i(P)=\ell_{n-i}(P)$ for $0\leq i\leq n$ and it is \emph{rank-unimodal}  if $\ell_0(P)\leq\ldots\leq \ell_j(P) \geq \ell_{j+1}(P) \geq\ldots\geq\ell_n(P)$ for some $0\leq j\leq n$. Denote by $\RSU$ the family of all graded posets that are rank-symmetric and rank-unimodal, and by $\RSU(n)$ the posets in $\RSU$ of rank $n$.
 
 We will extend the notion of centeredness only to the posets in $\RSU$. Note that every $P\in \RSU(n)$ satisfies that its largest layer is $\cL_{\lfloor n/2\rfloor}(P)$ and its $k$ largest layers are the $k$ layers closest to the middle layer.  Examples of such posets include $\{0,1,\ldots,k\}^n$ where $(A_1,\dots, A_n)\leq (B_1,\dots, B_n)$ if $A_i\leq B_i$ for all $1\leq i\leq n$, and the poset $\mathcal{V}(q,n)$ of subspaces of $\mathbb{F}_q^n$ ordered by inclusion where $q$ is a prime power.

Similarly as before, given a poset $P\in\RSU(n)$, we say that a family $\F \subseteq P$ is \emph{centered} if for any two sets $A,B\in P$ with $A\in\F$ and $B\notin \F$ we have that their ranks $\rk(A), \rk(B)$ satisfy
$$\bigg|\rk(A)-\frac{n}{2}\bigg|\leq\bigg|\rk(B)-\frac{n}{2}\bigg|.$$ In other words, $\F$ is centered if it is constructed by ``taking sets that are as close to the middle layer as possible''. Note that if $P=\{0,1,\ldots,k\}^n$, then this definition is the same as the definition of `centered' introduced in the previous section (where the rank of $P$ was $nk$). 

Consider now for a prime power $q$ the poset $\mathcal{V}(q,n)$ of subspaces of $\mathbb{F}_q^n$ ordered by inclusion. Denote by ${n\brack i}_q$ the number of subspaces of $\mathbb{F}_q^n$ of dimension $i$. Note that ${n\brack i}_q=\prod_{j=0}^{i-1} \frac{1-q^{n-j}}{1-q^{j+1}}$. 
The following result of Noel, Scott, and Sudakov~\cite{noelscottsudakov} provides a lower bound on $\comp(\F)$ for $\F\subseteq\mathcal{V}(q,n)$.

\begin{thm}[Noel--Scott--Sudakov~\cite{noelscottsudakov}]
Let $q$ be a prime power and $k$ be a fixed positive integer. There exists a constant $n_0(k)$ such that for $n\geq n_0(k)$ and $\F\subseteq \mathcal{V}(q,n)$, 
$$\mbox{If }\ \ |\F|\ge \sum_{r=0}^{k-1}{n \brack \lceil\frac{n-k+1+2r}{2}\rceil}_q+t,\ \ \ \mbox{ then }\ \ \ \emph{comp}(\F)\geq t{\lceil (n+k)/2\rceil\brack k}_q.$$
\end{thm}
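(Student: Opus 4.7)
Set $a:=\lceil(n-k+1)/2\rceil$, so the middle $k$ layers of $\mathcal V(q,n)$ are $\cL_a,\ldots,\cL_{a+k-1}$, with total size $M_0$. Writing $C:={\lceil(n+k)/2\rceil\brack k}_q$, the goal is $\comp(\F)\ge tC$ whenever $|\F|=M_0+t$. The plan is to reduce $\F$ to a canonical centered configuration by a shifting/compression argument and then to count comparable pairs in that configuration directly.

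For the reduction, take $\F$ extremal (minimising $\comp$ subject to $|\F|=M_0+t$). I would argue that $\F$ must contain every middle layer. If some $w\in\cL_{a'}\setminus\F$ exists with $a\le a'\le a+k-1$, then since $|\F|>M_0-1$ some $v\in\F$ lies in an outer layer, and we consider the swap $\F\mapsto(\F\setminus\{v\})\cup\{w\}$. With a prescribed order (choose $w$ in the largest missing middle layer and $v$ in the outermost occupied layer), I would show that the swap does not increase $\comp(\F)$; the key input is the normalised matching property of $\mathcal V(q,n)$, which allows one to bound the change in $\F$-degree between adjacent layers. Iterating yields $\cL_a\cup\cdots\cup\cL_{a+k-1}\subseteq\F$. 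A further shifting step within outer layers pushes the $t$ remaining elements into a single layer adjacent to the middle, which by rank-symmetry we may take to be $\cL_{a-1}$.

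For the counting, set $X:=\F\cap\cL_{a-1}$, so $|X|=t$. Each $v\in X$ is a subspace of dimension $a-1$, and the number of $(a+k-1)$-dimensional subspaces containing $v$ equals ${n-(a-1)\brack (a+k-1)-(a-1)}_q={n-a+1\brack k}_q$. A short parity check, treating $n-k$ even and odd separately, shows $n-a+1=\lceil(n+k)/2\rceil$ in both cases, so this count is exactly $C$. Since $\cL_{a+k-1}\subseteq\F$, every such superspace lies in $\F$, contributing $C$ comparable pairs per element of $X$. Different $v\in X$ produce distinct pairs $(v,u)$ because the lower member uniquely determines $v$ in this count, so summing gives $\comp(\F)\ge tC$.

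The main technical obstacle is justifying that the swap in the reduction step does not increase $\comp(\F)$. Naively, replacing an outer $v$ by a middle $w$ looks risky since $w$ lies in a larger layer and has more global neighbours; the saving grace is that most of these potential neighbours are not in $\F$. Making this precise requires combining the normalised-matching inequality with an induction on the number of out-of-middle elements of $\F$, exploiting that the comparability shadow of $w$ inside $\F$ is controlled by the $\F$-profile rather than by the raw layer sizes. The hypothesis $n\ge n_0(k)$ enters by providing a gap in the Gaussian binomial ratios $\ell_{a-1}/\ell_{a}$ and $\ell_{a+k}/\ell_{a+k-1}$ large enough for a suitable potential function to decrease at every swap.
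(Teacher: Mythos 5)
This theorem is quoted by the paper from Noel--Scott--Sudakov and is not proved here, so there is no ``paper proof'' to compare against directly; I will assess your proposal on its merits and against the closely related machinery the paper does develop (Theorem~\ref{subspacebs} and Lemma~\ref{kleitmanstatedgenerally}).

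Your counting step is correct: the parity check $n-a+1=\lceil(n+k)/2\rceil$ goes through in both cases, each $(a-1)$-dimensional subspace has exactly ${n-a+1\brack k}_q=C$ superspaces of dimension $a+k-1$, and distinct $v\in X$ give disjoint pair sets, so once the reduction is in place the bound $tC$ follows. The structural difficulty, however, is concentrated entirely in the reduction, and as written it is not a proof. Showing that swapping an outer element for a middle element never increases $\comp$ is precisely the content of Kleitman's compression lemma adapted to $\mathcal V(q,n)$; the paper carries this out in Section~5 (property~(Q) plus the mid-compression argument of Claim~\ref{l_compressed_gen}), and it is genuinely delicate -- a single ``normalised matching'' appeal does not suffice, one needs the Hall-type case analysis on the bipartite graph between $\F_a$ and $\overline{\F_b}$. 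If you carry that out, you have in effect re-proved the paper's Theorem~\ref{subspacebs}, which is strictly stronger than the target statement; that is not wrong, but it is substantial overkill, and you should then just invoke centeredness once and for all rather than phrase it as an ad hoc sequence of swaps.

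Two concrete gaps remain. First, ``$\F$ must contain every middle layer'' is too strong -- extremal families are not unique, and the correct claim is that \emph{some} extremal family can be compressed to this form; the argument should be phrased as choosing an extremal $\F$ that additionally minimises a potential such as $\sum_{A\in\F}\bigl|\,|A|-n/2\,\bigr|$, as the paper does. Second, ``pushes the $t$ remaining elements into a single layer adjacent to the middle'' silently assumes $t\le\ell_{a-1}$, whereas the theorem places no upper bound on $t$. For larger $t$ the extra elements necessarily spill into $\cL_{a-2},\cL_{a+k},\dots$, and you then need to verify that each such element still contributes at least $C$ comparable pairs with the middle block -- true, since for an $(a-j)$-dimensional subspace the count of $(a+k-1)$-dimensional superspaces is ${n-a+j\brack k+j-1}_q\ge{n-a+1\brack k}_q$, but this needs to be said. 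Finally, note that the compression route makes no real use of the hypothesis $n\ge n_0(k)$; the paper's Theorem~\ref{subspacebs} holds for all $n$, which suggests Noel--Scott--Sudakov's original argument (likely a direct supersaturation/chain-counting bound in the spirit of the paper's Theorem~\ref{largeresult}(a)) is of a different and lighter character, and the $n_0(k)$ is an artifact of their more general framework rather than something your shifting argument should require.
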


They pointed out that this bound is attained by a centered family and hence best possible when $k=1$ and $0\leq t\leq {n \brack \lfloor (n-1)/2\rfloor}_q$. We show that centered families are best for all sizes.
\begin{thm}\label{subspacebs}
Let $q$ be a prime power and $n\geq 1$. Then the poset $\mathcal{V}(q,n)$ has the centeredness property.
\end{thm}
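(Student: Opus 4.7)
The plan is to imitate Kleitman's original proof of Theorem~\ref{kleitmanthm}, proceeding by induction on $n$. The base case $n \leq 1$ is immediate. For the inductive step, fix a hyperplane $H \subseteq \mathbb{F}_q^n$, identified with $\mathbb{F}_q^{n-1}$, and for any family $\F \subseteq \mathcal{V}(q,n)$ write $\F = \F_H \sqcup \F^H$, where $\F_H = \{W \in \F : W \subseteq H\}$ and $\F^H = \F \setminus \F_H$. Then $\F_H$ sits naturally inside $\mathcal{V}(q,n-1)$ as a family of subspaces, and each $W \in \F^H$ satisfies $\dim(W \cap H) = \dim(W)-1$, yielding a many-to-one ``shadow map'' $W \mapsto W \cap H$ from $\F^H$ into $\mathcal{V}(q,n-1)$; specifically each $W_0 \subseteq H$ of dimension $d$ has $q^{n-1-d}$ preimages of dimension $d+1$.

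Decompose the number of comparable pairs as
\[
\comp(\F) = \comp(\F_H) + \comp(\F^H) + X(\F_H, \F^H),
\]
where the cross term $X$ counts pairs $A \subsetneq B$ with $A \in \F_H$ and $B \in \F^H$. The induction hypothesis applied to $\F_H \subseteq \mathcal{V}(q,n-1)$ lets us replace $\F_H$ by a centered family of the same size in $\mathcal{V}(q,n-1)$ without increasing $\comp(\F_H)$; a similar but more subtle step is needed for $\F^H$, after grouping its elements by shadow. A family $\F$ that is centered in $\mathcal{V}(q,n)$ is precisely one for which both parts are centered (in the appropriate rank-shifted sense) in $\mathcal{V}(q,n-1)$, so if we can perform the centering on each part without increasing the cross term $X$, we arrive at a centered $M$-optimal family.

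The main obstacle is controlling $X$. In Kleitman's Boolean setting, ``remove element $n$'' is a bijection between sets containing $n$ and their subsets in $\{0,1\}^{n-1}$, so $X$ can be paired off cleanly; here, $W \mapsto W \cap H$ is $q^{n-1-d}$-to-one on dimension $d+1$, so the bookkeeping is strictly more delicate. I would handle this via a local swap/compression argument: any replacement that moves $\F$ closer to centered (shifting one subspace to an adjacent layer nearer the middle) should be shown to not increase $\comp(\F_H)$, $\comp(\F^H)$, or $X$ individually, with the key inequalities coming from the log-concavity and symmetry of the Gaussian binomials ${n \brack i}_q$ and from the normalised matching property of $\mathcal{V}(q,n)$. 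The final step is to optimise the split $|\F_H|$ versus $|\F^H|$, which by the rank-unimodality of $\mathcal{V}(q,n)$ forces the extremum to land in a centered configuration.
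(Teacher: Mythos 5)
Your plan runs into a structural obstacle that does not exist in the Boolean lattice: fixing a hyperplane $H$ does not give $\mathcal{V}(q,n)$ a product structure analogous to $\P(n)\cong\P(n-1)\times\{0,1\}$. In $\P(n)$ both $\F_H$ and $\F^H$ live literally inside $\P(n-1)$, the shadow map is a bijection on each half, and $X$ is controlled by the comparability relation of $\P(n-1)$ alone. Here $\F^H$ does not embed into $\mathcal{V}(q,n-1)$, and the shadow $W\mapsto W\cap H$ is $q^{n-1-d}$-to-one with multiplicity depending on $d$, so the induced structure on $\F^H$ is not a family in $\mathcal{V}(q,n-1)$ at all. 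Your claim that a centered $\F$ in $\mathcal{V}(q,n)$ is ``precisely one for which both parts are centered (rank-shifted) in $\mathcal{V}(q,n-1)$'' is also false on its face: if $\F$ occupies dimensions $[a,b]$ with $a+b=n$, then $\F_H$ occupies $[a,b]$ inside $\mathcal{V}(q,n-1)$, whose center is $(n-1)/2$, not $n/2$, so $\F_H$ is not centered and cannot be the output of the inductive hypothesis. Consequently the two ``replace by a centered family'' steps are not well-posed, and the cross term $X$ cannot be paired off the way it is in Kleitman's $\P(n)$ argument.

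You acknowledge this as the ``main obstacle'' and propose to patch it with a local swap/compression step driven by log-concavity, symmetry of ${n\brack i}_q$, and normalized matching. But once you are doing layer-by-layer compressions, the hyperplane decomposition does no work, and you are in effect proposing to prove the theorem directly by compression inside $\mathcal{V}(q,n)$. That is exactly what the paper does, but with the needed precision: the paper isolates an abstract condition (Property~(Q), a set of four shadow inequalities) on a rank-symmetric, rank-unimodal poset, proves via a Hall's-theorem matching argument and an induction on the family size $M$ (not on $n$) that such a poset has the centeredness property (Lemma~\ref{kleitmanstatedgenerally}), and then checks that $\mathcal{V}(q,n)$ satisfies Property~(Q) from the explicit counts ${m\brack k}_q$ and ${n-m\brack k}_q$ of sub- and super-spaces. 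Your instinct that the Gaussian binomial identities and normalized matching carry the weight is right; what is missing is a correct framework, and the hyperplane-induction framework is the wrong one here.
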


Our proofs of Theorem~\ref{mainresult}~(a) and Theorem~\ref{subspacebs} are heavily based on the compression techniques of Kleitman~\cite{kleitman}. The construction in Theorem~\ref{largeresult} came from the observation that for large enough $M$, centered families are not even locally optimal, and in fact by replacing one of their elements in an appropriate way we can decrease the number of comparable pairs in the family.

For the corresponding maximization question, i.e.~determining the maximum possible number of comparable pairs amongst families of size $M$ in $\P(n)$ we refer the reader to \cite{alondasglebovsudakov}.

\section{Proof of Theorem~\ref{mainresult} (a)}\label{sectionmaina}

In this section we will apply the compression techniques of Kleitman~\cite{kleitman} to prove that for small $M$ there exist $M$-optimal centered families in $\{0,1,2\}^n$. 
Whenever $A=(A_1, \dots, A_n)$ is an element of $\{0,1,2\}^n$, we will define the \emph{size} (or \emph{rank}) of $A$ by $|A|\coloneqq \sum_{i=1}^n A_i$. We will use $a_0,a_1$ and $a_2$ to denote the number of $0$-, $1$-, and $2$-coordinates of $A$ (that is, $a_i\coloneqq |\{j:A_j=i\}|$). Similarly for $B\in\{0,1,2\}^n$ we will use the variables $b_0,b_1,b_2$ in the same fashion. The \emph{complement} of a set $A\in\{0,1,2\}^n$ is defined as $A^c\coloneqq (2-A_1,\dots,2-A_n)$. For a permutation $\pi\in S_n$ and a set $A\in\{0,1,2\}^n$ we denote by $\pi(A)$ the set $(A_{\pi(1)}, \dots, A_{\pi(n)})$. For a family $\F\subseteq\{0,1,2\}^n$ and integer $0\leq r\leq 2n$, we write $\F_r=\{A\in \F: |A|=r\}$ and $N_r(A)\coloneqq \{B: |B|=r,  B\subseteq A \text{ or } A\subseteq B\}$. Recall that in the poset $\{0,1,2\}^n$, $\cL_r(n,2)$ denotes the $r$-th layer  and $\Sigma_j(n,2)$ the total size of the $j$ middle layers. In this section, we will often shorten $\cL_r(n,2)$ to $\cL_r$ and $\Sigma_j(n,2)$ to $\Sigma_j$. Recall also that a family $\F\subseteq \{0,1,2\}^n$ of size $M$ is called $M$-optimal if there is no other family $\F'\subseteq \{0,1,2\}^n$ of size $M$ that contains strictly fewer comparable pairs than $\F$. Our goal is to show that there exists an $M$-optimal family that is centered.

Let $\eps>0$, let $n$ be sufficiently large so that all the following estimates hold, and fix an $M\leq(1-\eps)\Sigma_3(n,2)$. The proof is by induction on $M$, with the base case $M\le \Sigma_1(n,2)$ in which case there is an antichain in $\cL_n$ of size $M$ and the claim follows. Hence we will assume that there exists an $(M-1)$-optimal centered family, and show that there exists an $M$-optimal centered family. Our first goal is to show that there exist $M$-optimal families that are contained in the middle three layers of $\{0,1,2\}^n$.

The following claim will be useful for us:

\begin{claim}\label{claimfuncond}
If $A, B\in \{0,1,2\}^n$ such that $B\subseteq A$ and $|B|\geq n$, then for every $i\in \{1,\dots,|A|-|B|\}$,
$$|N_{|B|+i}(B)|\le |N_{|A|-i}(A)|.$$

\end{claim}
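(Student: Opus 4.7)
My plan is to rewrite both sides as counts of rank-$i$ elements in downsets, and then exhibit a coordinate permutation placing $B^c$ pointwise below $A$.

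For the rewriting: since $i \geq 1$, every element of $N_{|B|+i}(B)$ lies strictly above $B$, and the translation $C \mapsto C - B$ bijects $N_{|B|+i}(B)$ onto $\{E \in \{0,1,2\}^n : E \leq B^c,\ |E| = i\}$, where $B^c := (2-B_1,\ldots,2-B_n)$. Dually $D \mapsto A - D$ bijects $N_{|A|-i}(A)$ with $\{D \leq A : |D| = i\}$. Thus the claim reduces to comparing the rank-$i$ slices of the downsets of $B^c$ and $A$.

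The heart of the proof is to construct a permutation $\pi \in S_n$ with $\pi(B^c) \leq A$ coordinatewise. Writing $a_j$ and $b_j$ for the number of $j$-coordinates of $A$ and $B$, such a $\pi$ amounts to a bipartite matching, and a greedy argument (assign the $2$s of $B^c$ to $2$s of $A$, then the $1$s of $B^c$ to the remaining non-$0$ positions of $A$) reduces its existence to the two inequalities $b_0 \leq a_2$ and $b_0 + b_1 \leq a_1 + a_2$. These follow from $B \leq A$ together with $|B| \geq n$, the latter being equivalent to $b_2 \geq b_0$: decomposing the coordinates by the pair $(B_k, A_k)$ and parametrising by $c_{ij} := |\{k : B_k = i,\ A_k = j\}|$, the hypothesis $b_2 \geq b_0$ translates to $c_{22} \geq c_{00}+c_{01}+c_{02}$, from which both required inequalities drop out immediately.

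Once $\pi$ is in hand, coordinate permutation is a rank-preserving automorphism of $\{0,1,2\}^n$, so the rank-$i$ part of the downset of $B^c$ bijects with the rank-$i$ part of the downset of $\pi(B^c)$, which is contained in that of $A$. This finishes the proof. The main obstacle is the second step: recognising that the hypotheses $|A|, |B| \geq n$ (which intuitively place both $A$ and $B$ above the middle layer, so that $B^c$ lies on or below it) are precisely what is needed to dominate $B^c$ by $A$ up to relabelling. After this observation the remaining work is elementary matching-style bookkeeping.
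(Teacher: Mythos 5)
Your proof is correct and follows essentially the same route as the paper: both reduce the claim to exhibiting a coordinate permutation $\pi$ with $\pi(B^c)\leq A$, and both derive the requisite inequalities from $B\subseteq A$ together with $|B|\geq n$. The only cosmetic differences are that you make the neighborhood-counting step explicit via the translations $C\mapsto C-B$ and $D\mapsto A-D$ (the paper instead uses the identity $|N_{|B|+i}(B)|=|N_{|B^c|-i}(B^c)|$), and you verify the matching via the $c_{ij}$ table and a greedy assignment, whereas the paper argues directly with the parameters $k=|A|-n$, $l=|B|-n$ to get $b_0\leq a_2$ and $b_2\geq a_0$ (the latter being equivalent to your $b_0+b_1\leq a_1+a_2$).
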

\begin{proof}

Suppose that $|A|,|B|\geq n$. We show that $B^c$ has at most as many $2$'s and at least as many $0$'s as $A$. This implies that there exists a permutation $\pi(B^c)$ of the coordinates of $B^c$ such that $\pi(B^c)\subseteq A$.  Thus, $\pi(B^c)$ has at most as many neighbors in level $\cL_{|\pi(B^c)|-i}$ as $A$ does in level $\cL_{|A|-i}$, for every $i\in \mathbb{N}\cup \{0\}$, so $$|N_{|B|+i}(B)|=  |N_{|B^c|-i}(B^c)| = |N_{|B^c|-i}(\pi(B^c))| \le |N_{|A|-i}(A)|.$$

The number of $0$'s in $B^c$ is equal to $b_2$ and the number of $2$'s in $B^c$ is equal to $b_0$. Hence we want to show that $b_0\leq a_2$ and $b_2\geq a_0$. Note first that since $B\subseteq A$, we have $b_2\leq a_2$ and $b_0\geq a_0$.

Let $k,l$ be such that $|A|=n+k$ and $|B|=n+l$. From $|A|>|B|\geq n$ we have that $k>l\geq 0$.
Since $a_0+a_1+a_2=n$ and $a_1+2a_2=n+k$, we have $a_2-a_0=k$, and similarly $b_2-b_0=l$. 
$$
b_0 = b_2-l \le a_2-l \le a_2\ \ \ \ \mbox{ and }\ \ \ \  b_2 =b_0+l\ge b_0 \ge a_0.
$$
\end{proof}

A family $\F$ in a poset $P\in\RSU$ is \emph{compressed} if for every element $A\in \F$, every element comparable with $A$ that is closer to the middle than $A$ is in $\F$. Kleitman proved that every family in the Boolean lattice ``can be compressed'' without increasing the number of comparable pairs. It is not clear why this would be the case for $\{0,1,\dots, k\}^n$ with $k>2$. 
In the poset $\{0,1,2\}^n$ we can however at least obtain an analogous result for a weaker notion of top- and bottom-compressed, given in the following definition.

\begin{definition}\normalfont
A family $\F\subseteq \{0,1,2\}^n$ is \emph{top-compressed} if the following condition holds:
\begin{itemize}[itemsep=2pt,parsep=2pt,topsep=2pt,partopsep=2pt]
\item[(T)] If $A\in\F$ with $|A|> n$ and $B\subseteq A$ with $|B|\ge n$, then $B\in\F$.
\end{itemize}
A family $\F\subseteq \{0,1,2\}^n$ is \emph{bottom-compressed} if the following condition holds:
\begin{itemize}[itemsep=2pt,parsep=2pt,topsep=2pt,partopsep=2pt]
\item[(B)] If $A\in\F$ with $|A|< n$ and $B\supseteq A$ with $|B|\le n$, then $B\in\F$.
\end{itemize}

\end{definition}

\begin{lemma}\label{l_compressed}
For every natural number $K\le 3^n$, there exists a $K$-optimal family that is top- and bottom-compressed.
\end{lemma}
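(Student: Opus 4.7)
The plan is to handle (T) and (B) in sequence and reduce one to the other via the complementation involution $A \mapsto A^c := (2 - A_1, \dots, 2 - A_n)$. This map is an order-reversing bijection of $\{0,1,2\}^n$ that exchanges level $t$ with level $2n - t$, preserves $\comp$, and sends top-violations of $\F$ to bottom-violations of $\F^c := \{A^c : A \in \F\}$. Moreover, a bottom-compression move removes an element of rank $< n$ and adds one of rank $\le n$, so it cannot create a top-violation (which requires an element of rank $> n$); symmetrically, a top-compression move cannot create a bottom-violation. Hence it suffices to show: every $M$-optimal $\F$ can be modified to an $M$-optimal family satisfying (T); applying this to $\F^c$ then yields bottom-compression without disturbing top-compression.

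For (T), use the potential $\Phi(\F) := \sum_{A \in \F,\, |A| > n} (|A| - n)$, a nonnegative integer. If $\F$ violates (T), pick a violation $(A, B)$ with $A \in \F$, $B \subsetneq A$, $B \notin \F$, $|A| > n$, $|B| \ge n$, minimizing $|A| - |B|$. Any $C$ with $B \subsetneq C \subsetneq A$ satisfies $|C| > |B| \ge n$; if $C \notin \F$ then $(A, C)$ is a smaller violation, while if $C \in \F$ then $(C, B)$ is one, contradicting minimality. Hence $A$ covers $B$, differing in a single coordinate $i$ with $A_i = B_i + 1$. Replace $A$ with $B$ to form $\F'$, which reduces $\Phi$ by one, so iteration must terminate in a top-compressed family.

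The key step is verifying $\comp(\F') \le \comp(\F)$. Expanding the difference and canceling contributions from elements comparable to both $A$ and $B$ reduces this to
$$\bigl|\{X \in \F \setminus \{A\} : X \supseteq B,\ X \text{ incomparable with } A\}\bigr| \le \bigl|\{X \in \F \setminus \{A\} : X \subseteq A,\ X \text{ incomparable with } B\}\bigr|.$$
Because $A$ covers $B$ at coordinate $i$, elements $X$ in the left-hand ``gain'' set satisfy $X_i = B_i$, $X_j \ge B_j$ for all $j \ne i$, and $X_j > B_j$ for some $j \ne i$; dually, elements of the right-hand ``loss'' set satisfy $X_i = A_i$, $X_j \le B_j$ for all $j \ne i$, and $X_j < B_j$ for some $j \ne i$. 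Claim~\ref{claimfuncond} provides the poset-level bound $|N_{|A|}(B)| \le |N_{|B|}(A)|$, which in this covering setup amounts to $|\{j : B_j < 2\}| \le |\{j : A_j > 0\}|$, following from $a_0 \le b_2$ proved within the claim; moreover, the permutation $\pi$ constructed there yields an explicit injection between the two parameterizing sets in $\{0,1,2\}^{n-1}$.

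The main obstacle is upgrading this poset-level inequality to one that holds after restricting to $\F$, since Claim~\ref{claimfuncond} controls raw cardinalities in $\{0,1,2\}^n$ rather than intersections with $\F$. The plan to bridge this gap is to exploit additional structure: either (i) choose $A$ of maximum $|A|$ in $\F$ among elements involved in violations, so that contributions to $\comp$ from layers above $|A|$ vanish, or (ii) average the change in $\comp$ over all candidate swap targets $B \subseteq A$ with $|B| = |A| - 1 \ge n$ and $B \notin \F$, and show that this total is non-positive so that at least one swap does not increase $\comp$. Once $\comp(\F') \le \comp(\F)$ is established for some valid swap, iteration produces an $M$-optimal top-compressed family, and the complementation symmetry above then completes the proof with bottom-compression.
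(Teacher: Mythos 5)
Your reduction of (B) to (T) via complementation, and your observation that bottom-compression moves cannot create (T)-violations, both match the paper and are sound. The core of your argument, however, takes a genuinely different route, and it has a real gap.

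The paper does \emph{not} swap a single covering pair. It picks a (T)-violation $(A,B)$ with $|A|$ maximal and then $|A|-|B|$ minimal (so $a-b$ can exceed $1$), forms the bipartite graph on $\F_a$ versus $\overline{\F_b}=\cL_b\setminus\F$ with edges given by comparability, and then, via a Hall-type case analysis, exchanges an entire matched subset $\A\subseteq\F_a$ for $\B\subseteq\overline{\F_b}$ in one step. The case analysis is carefully arranged so that the poset-level bound from Claim~\ref{claimfuncond} suffices: either the swapped-in set $\B$ has \emph{no} edges to the surviving part $\G_a=\F_a\setminus\A$ (Case~1, Case~2a), or $\B$ is the \emph{entire} neighborhood $N(\F_a)\cap\overline{\F_b}$ (the matching-covering-$\Y$ subcase), in which case the identity $e(\A,\cL_b)=e(\A,\F_b)+e(\A,\B)$ holds exactly and the cardinality comparison closes. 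This is precisely the device that converts a raw-cardinality inequality into one restricted to $\F$.

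Your single-swap plan does not have access to this device, and the obstacle you flag is not a technicality you can defer — it is the crux. After swapping one $A$ for one $B$ covered by $A$, the ``gain'' set is $\{X\in\F\setminus\{A\}:X\supsetneq B,\ X\text{ incomparable with }A\}$ and the ``loss'' set is $\{X\in\F\setminus\{A\}:X\subsetneq A,\ X\text{ incomparable with }B\}$; Claim~\ref{claimfuncond} compares the ambient shadows $|N_a(B)|$ and $|N_b(A)|$, but says nothing about their intersections with $\F$. Fix~(i) (take $|A|$ maximal) only confines the gain set to level $a$; it places no lower bound on the loss set, which lives at levels $\le a-1$ and depends on how much of $A$'s lower shadow actually lies in $\F$ — something an arbitrary $M$-optimal family does not control. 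The concrete failure mode is exactly the one Hall's theorem is summoned to handle: if several elements $A_1,\dots,A_m\in\F_a$ all lie above the same missing $B$, swapping a single $A_1\to B$ creates $m-1$ new pairs $(B,A_j)$ at level $a$, and there is no a priori reason for $\F$ to contain $m-1$ elements below $A_1$ incomparable with $B$. Fix~(ii) (averaging over swap targets) is only a plan, not an argument, and as stated it averages over $B\subseteq A$ with $A$ fixed, which does not address the multi-$A_j$ issue at all. As written, the inequality $\comp(\F')\le\comp(\F)$ — the one thing the whole proof turns on — is not established, so the proof is incomplete.
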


\begin{proof}
Let $\F$ be a $K$-optimal family and suppose that condition (T) is violated. Handling the case when (B) is violated is the same. Let $(A,B)$ be such a violating set-pair that satisfies that whenever $(A',B')$ is another set-pair violating condition (T) we have
\begin{itemize}
\item $|A|>|A'|$, or
\item $|A|=|A'|$ and $|B|\geq |B'|$.
\end{itemize}  
Let $a=|A|$ and $b=|B|$ and note that all elements in levels $\cL_{b+1}, \dots, \cL_{a-1}$ that are comparable with $A$ are in $\F$.

Whenever $\X$ is a family in $\{0,1,2\}^n$ and $C\in\{0,1,2\}^n$ is any element of $\{0,1,2\}^n$ we write $N_{\X}(C)$ for the set of elements in $\X$ comparable with $C$. Additionally, for all $C\in\{0,1,2\}^n$ and $r\in \{0,1,\ldots,2n\}$ we let $N_r(C)\coloneqq N_{\cL_r}(C)$ and $N (C)\coloneqq N_{\{0,1,2\}^n}(C)$. Finally, whenever $\X,\Y$ are any two (not necessarily disjoint) families in $\{0,1,2\}^n$ we write $N_{\X}(\Y):=\bigcup_{Y\in\Y} N_{\X}(Y)$.

 We will show that we can iteratively replace some elements of $\F_{a}=\F\cap \cL_a$ by elements of $\overline{\F_b}=\cL_b \setminus \F$ without increasing the number of comparable pairs. We will consider several cases based on sizes of $\F_{a}$ and $\overline{\F_b}$ and the existence of ``good'' matchings that allow us to top-compress $\F$. Since $n\leq b<a$, the total value $\sum_{C\in \F} ||C|-n|$ of the family strictly decreases, ensuring that this process will terminate.

Form a bipartite graph with parts $\F_{a}$ and $\overline{\F_{b}}$ and with edges between comparable pairs. Our goal will be to find non-empty families $\A\subseteq \F_{a}$ and $\B\subseteq \overline{\F_{b}}$ with $|\A|=|\B|$ such that there is a perfect matching $f$ between $\A$ and $\B$. Note that for any such families $\A,\B$ the family $\G=(\F\setminus \A)\cup\B$ has the same size as $\F$; hence if we can pick the pair $(\A,\B)$ such that $\G=(\F\setminus \A)\cup\B$ has not more comparable pairs than $\F$ then we may replace $\F$ by $\G$. Hence Lemma~\ref{l_compressed} follows from the following claim:

\begin{claim}
There exist non-empty families $\A\subseteq  \F_{a}$ and $\B\subseteq \overline{\F_{b}}$ with $|\A|=|\B|$ such that 
\begin{itemize}
\item there is a perfect matching $f$ between $\A$ and $\B$, and
\item the number of comparable pairs in $\F$ is no less than the number of comparable pairs in $\G=(\F\setminus \A)\cup\B$.
\end{itemize}
\end{claim}
\begin{proof}
Given any $\A\subseteq \F_{a}$ and $\B\subseteq \overline{\F_{b}}$ with $|\A|=|\B|$, we pick two sets $A'\in\A$ and $B'\in \B$ with $B'\subset A'$ arbitrarily. We compare the sizes of neighborhoods of $\A$ and $\B$ in $\F$, in the following four parts of the poset $\{0,1,2\}^n$:
\begin{enumerate}[itemsep=2pt,parsep=2pt,topsep=2pt,partopsep=2pt]
\item In levels $\cL_{a+1}, \dots, \cL_{2n}$: if there was a set $A^*\in\F$ in one of these layers that is comparable to $B'$ then the pair $(A^*,B')$ would violate condition (T). Since $|A^*|>|A|$ this is not possible as $(A,B)$ was chosen amongst violating pairs so that $|A|$ is as large as possible.
\item In levels $\cL_{0}, \dots, \cL_{b-1}$: Since $B'\subseteq A'$, whenever $C\in\{0,1,2\}^n$ is such that $C\subseteq B'$, we also have $C\subseteq A'$.
\item In levels $\cL_{b+1}, \dots, \cL_{a-1}$: Since all elements in these levels that are comparable with $A'$ are in $\F$, by Claim~\ref{claimfuncond}, for every $i\in [a-b-1]$,
$$|\F\cap N_{b+i}(B')|\le |N_{b+i}(B')|  \le |N_{a-i}(A')| = |\F\cap N_{a-i}(A')|.$$ Thus, every element $B'\in \B$ has at most as many neighbors in $\cL_{b+1}\cup \dots \cup \cL_{a-1}$ as every $A'\in \A$ does. 
\item In levels $\cL_{a}$ and $\cL_{b}$, the proof splits into several cases. 
\end{enumerate}
 In each case below, we present suitable families $\A\subset \F_a$ and $\B\subset \overline{\F_b}$ with a perfect matching $f$ between $\A$ and $\B$ for which
\begin{equation}\label{eq_Levels_a_b}
e(\B, \G_a) \le e(\A, \F_b),
\end{equation}
where $e(\C,\D)$ denotes the number of edges between the families $\C$ and $\D$, and $\G_a=\F_a\setminus \A$. Note that if $\A,\B$ satisfies~(\ref{eq_Levels_a_b}) then by the above, the family $\G = \F\cup \B \setminus \A$ has at most as many comparable pairs as $\F$ does.

Suppose first that there exists a matching $f$ between $\F_{a}$ and $\overline{\F_b}$ covering $\F_{a}$. Let $\A=\F_{a}$ and $\B=f(\F_{a})$. Then $\G_a=\F_a\setminus \A = \emptyset$, so $e(\B, \G_a)=0$ and~(\ref{eq_Levels_a_b}) is satisfied for this choice of $\A,\B$. Henceforth we assume that there is no matching $f$ between $\F_{a}$ and $\overline{\F_b}$ covering $\F_{a}$, and we restrict our attention to the bipartite graph between vertex sets $(\X, \Y)$, where $$\X\coloneqq \F_{a}\ \  \mbox{ and }\ \ \Y\coloneqq N(\F_{a}) \cap \overline{\F_b},$$with edges between comparable pairs.

\textbf{Case 1:} $|\X|\leq |\Y|$.
By Hall's theorem, since there is no matching between $\X$ and $\Y$ covering $\X$, there must be a vertex set $\X_0\subseteq \X$ such that $|N_{\Y}(\X_0)|< |\X_0|$. Choose $\X_0$ to be a maximal such vertex set. Then there must exist a matching $f$ between $\X\setminus\X_0$ and $\Y \setminus N_{\Y}(\X_0)$ covering $\X\setminus\X_0$. Define $\A=\X \setminus \X_0$ and $\B=f(\X\setminus\X_0)$. Since there is no edge between $\B=f(\X\setminus\X_0)$ and $\G_a= \X_0$, the relation (\ref{eq_Levels_a_b}) holds.

\begin{figure}
\centering     
\subfigure[Case 1.]{\label{fig1} \includegraphics[scale=0.85]{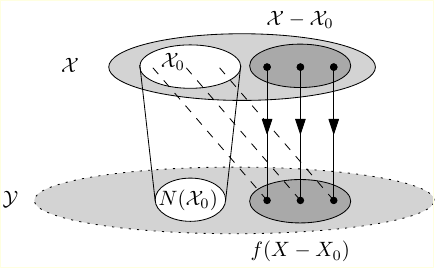}}
\hspace{1cm}
\subfigure[Case 2. There exists a matching between $\X$ and $\Y$ covering $\Y$.]{\label{fig4} \includegraphics[scale=0.85]{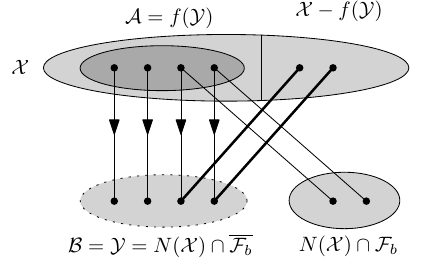}}
\end{figure}

\textbf{Case 2:} $|\X|> |\Y|$. Suppose first that there exists a matching $f$ between $\Y$ and $\X$ covering $\Y$. Let $\A=f(\Y)$ and $\B=\Y$. Claim~\ref{claimfuncond} applied with $i=a-b$ on every pair $(f(C),C)\in (\A,\B)$, we have $e(\B, \cL_a)\le e(\A, \cL_b)$, so
$$e(\B, \G_a)+ e(\B, \A)  = e(\B, \F_a) \le e(\B, \cL_a) \le e(\A, \cL_b)  = e(\A, \F_b)+ e(\A, \B). $$
The inequality (\ref{eq_Levels_a_b}) follows by subtracting $e(\A, \B)$ from both sides. 

Suppose now that there is no matching covering $\Y$. By Hall's theorem, there must exist a minimal vertex set $\Y_0\subseteq \Y$ such that $|N(\Y_0)|<|\Y_0|$. Consider the following two subcases:
\begin{enumerate}[itemsep=2pt,parsep=2pt,topsep=2pt,partopsep=2pt]
\item[a)] There is a matching $f$ between $\Y_0$ and $N(\Y_0)$ covering $N(\Y_0)$. Let $\A=N(\Y_0)$ and $\B=f(N(\Y_0))$. There is no edge between $\B$ and $\G_a=\F_a \setminus \A$, hence $e(\B,\G_a)=0$ and the inequality (\ref{eq_Levels_a_b}) trivially holds.

\begin{figure}
\centering     
\subfigure[Case 2a.]{\centering \label{fig13}\includegraphics[scale=0.85]{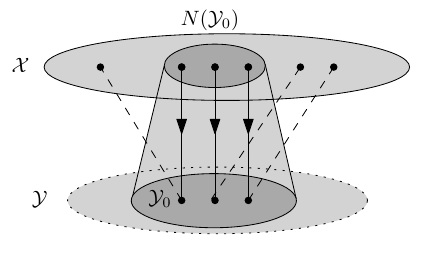}}
\hspace{1cm}
\subfigure[Case 2b.]{\centering \label{fig14}\includegraphics[scale=0.85]{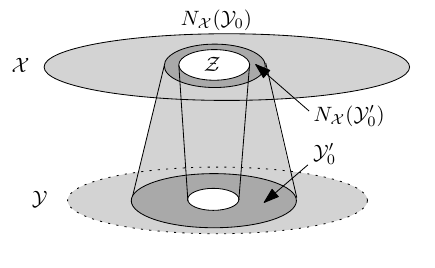}}
\end{figure}


\item[b)] There is no matching between $\Y_0$ and $N(\Y_0)$ covering $N(\Y_0)$. By Hall's theorem, there exists a vertex set $\Z\subseteq N(\Y_0)$ with $|N_{\Y_0}(\Z)|<|\Z|$. Then $\Y_0'\coloneqq \Y_0 \setminus N_{\Y_0}(\Z)$ is smaller than $\Y_0$. Since $|N_{\X}(\Y_0)|<|\Y_0|$ and $|\Z|>|N_{\Y_0}(\Z)|$, we also have
 $$|N(\Y_{0}')|\le |N(\Y_0)|- |\Z|<|\Y_0| - |N_{\Y_0}(\Z)|=|\Y_0'|,$$ and we can conclude that $\Y_0$ was not a minimal set with $|N(\Y_0)|< |\Y_0|$, a contradiction.
\end{enumerate}
We showed that there exists a $K$-optimal family $\F$ that is top-compressed.
The proof that $\F$ can ``be made'' bottom-compressed without increasing the number of comparable pairs follows by the above proof applied on $\F^c=\{X^c: X\in \F\}$.

\end{proof}
\end{proof}

Lemma~\ref{l_compressed} ensures the existence of an $M$-optimal top- and bottom-compressed family. Although we will use the lemma only for $M\le (1-\eps) \Sigma_{3}$, we emphasize that the result holds for any $M$, which might be of independent interest. Our next goal is to find an $M$-optimal family which additionally satisfies conditions (C1) and (C2) in the following definition. Recall that for $i\in\{0,1,2\}$ we have the notation $a_i=|\{j:A_j=i\}|$ and $b_i=|\{j:B_j=i\}|$.

\begin{definition}\normalfont
We say that a family $\F\subseteq \{0,1,2\}^n$ of size $M$ is $3$\emph{-compressed} if $\F$ is top-compressed, bottom-compressed, and additionally the following two conditions hold:
\begin{itemize}[itemsep=2pt,parsep=2pt,topsep=2pt,partopsep=2pt]
\item[(C1)] If $A$ is a maximum sized element of $\F$ with $|A|= n+2$ and $B\subseteq A$ is such that $|B|=n-1$ and $b_0>a_0$ then $B\in\F$.
\item[(C2)] If $A$ is a minimum sized element of $\F$ with $|A|= n-2$ and $B\supseteq A$ is such that $|B|=n+1$ and $b_2>a_2$ then $B\in\F$.
\end{itemize}
\end{definition}

The following claim is an analogue statement to Claim~\ref{claimfuncond}.
\begin{claim}\label{3compressclaim}
Let $A, B\in \{0,1,2\}^n$ such that $B\subseteq A$. If $|A|=n+2,|B|=n-1$, and $b_0 \neq a_0$, then for every $i\in\{1,2,3\}$, 
$$|N_{|B|+i}(B)|\le |N_{|A|-i}(A)|.$$
\end{claim}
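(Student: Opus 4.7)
My plan is to follow the template of Claim~\ref{claimfuncond}: permute the coordinates of $B^c$ so that $\pi(B^c) \subseteq A$ and then transfer subset counts along this inclusion. This works in almost all configurations, but fails at one boundary case which I would dispatch by a short generating-function computation. The hypotheses $|A| = n+2$ and $|B| = n-1$ force $a_2 = a_0 + 2$ and $b_0 = b_2 + 1$. The inclusion $B \subseteq A$ gives $b_0 \ge a_0$ and $b_2 \le a_2$, and combining $b_0 \ne a_0$ with $b_0 \ge a_0$ yields $b_2 = b_0 - 1 \ge a_0$. I would then split on whether $b_2 < a_2$ or $b_2 = a_2$.

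\emph{Case 1: $b_2 < a_2$.} Then $b_0 = b_2 + 1 \le a_2$, so $B^c$ has at most $a_2$ twos and at least $a_0$ zeros, and we can permute its coordinates so that $\pi(B^c) \subseteq A$ coordinate-wise. Setting $D := A - \pi(B^c)$ we have $|D| = |A| - |B^c| = 1$, and the map $C \mapsto C + D$ injects subsets of $\pi(B^c)$ of rank $|\pi(B^c)| - i$ into subsets of $A$ of rank $|A| - i$ (the image is coordinate-wise below $A$ since $C \le \pi(B^c)$). Chaining with the elementary identities $|N_{|B|+i}(B)| = |N_{|B^c|-i}(B^c)| = |N_{|\pi(B^c)|-i}(\pi(B^c))|$ yields the desired inequality.

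\emph{Case 2: $b_2 = a_2$.} Now $b_0 = a_0 + 3$ and $b_1 = a_1 - 3$, and the embedding approach fails because $B^c$ has one too many twos to fit inside $A$. Here I would expand the two neighborhoods as generating functions,
$$|N_{|B|+i}(B)| = [z^i](1+z+z^2)^{b_0}(1+z)^{b_1}, \qquad |N_{|A|-i}(A)| = [z^i](1+z)^{a_1}(1+z+z^2)^{a_2},$$
substitute $a_2 = a_0 + 2$, and factor the difference as
$$(1+z)^{a_1-3}(1+z+z^2)^{a_0+2}\bigl[(1+z)^3 - (1+z+z^2)\bigr] = (1+z)^{a_1-3}(1+z+z^2)^{a_0+2}\,(2z+2z^2+z^3),$$
which has non-negative coefficients ($a_1 \ge 3$ is automatic in this case, since three units had to come off from $1$-coordinates).

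The main subtlety I expect is isolating the boundary case $b_2 = a_2$ where the permutation $\pi$ fails to exist, and then recognizing that a short polynomial identity handles it. The hypothesis $b_0 \ne a_0$ is precisely what rules out the symmetric obstruction $b_2 < a_0$ that would otherwise prevent the permutation argument in Case 1.
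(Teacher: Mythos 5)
Your proof is correct, and the case split ($b_2 < a_2$ versus $b_2 = a_2$) coincides with the paper's; your embedding argument for $b_2 < a_2$ is the same as the paper's Case~2 there, and your observation that $b_0 \neq a_0$ is exactly what rules out the obstruction $b_2 < a_0$ matches the paper's derivation $b_2 = b_0 - 1 \geq a_0$. The genuine difference is in how the boundary case $b_2 = a_2$ is dispatched. The paper expands $|N_{|A|-i}(A)|$ and $|N_{|B|+i}(B)|$ explicitly as sums of binomial coefficients for each $i\in\{1,2,3\}$ separately, subtracts, simplifies, and then notices that the resulting quadratic in $b_0,b_1$ for $i=3$ could be negative only when $b_0=0$, $b_1=1$, which it rules out by observing that every element of $\cL_{n-1}$ has $b_0 = b_2 + 1 \geq 1$. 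Your generating-function factorization $(1+z)^{a_1-3}(1+z+z^2)^{a_0+2}(2z+2z^2+z^3)$ handles all three values of $i$ uniformly (indeed all $i\geq 1$), makes the non-negativity of each coefficient manifest, and needs no closing impossibility argument beyond noting that $a_1 \geq 3$ follows from $b_1 = a_1 - 3 \geq 0$. This is a cleaner execution of the same case: it replaces three separate binomial computations and an edge-case check with a single polynomial identity, at the modest cost of introducing the generating-function encoding of neighborhoods (which is really already implicit in the paper's binomial formulas).
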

\begin{proof}

Suppose that $|A|=n+2$ and $|B|=n-1$. Since $b_0\neq a_0$, we only need to consider the following two cases: 

\textbf{Case 1:} $b_2=a_2$.
The number of elements in levels $n+1$, $n$, and $n-1$, comparable with $A$, are 
$$\alpha_1\coloneqq a_2+a_1,\ \ \alpha_2\coloneqq  \binom{a_2+a_1}{2}+a_2,  \ \mbox{and} \ \alpha_3 \coloneqq  \binom{a_2+a_1}{3}+a_2\cdot (a_1+a_2-1), $$
respectively. Similarly, the number of elements in levels $n$, $n+1$, and $n+2$, comparable with $B$, is
$$\beta_1\coloneqq b_0+b_1,\ \mbox{and}\ \beta_2\coloneqq \binom{b_0+b_1}{2}+b_0, \ \mbox{and}\ \beta_3\coloneqq \binom{b_0+b_1}{3}+b_0\cdot (b_1+b_0-1),$$ 
respectively.
Note that $a_1=b_1+3$ and $a_2=b_2=b_0-1$, and so $a_2+a_1=b_0+b_1+2$.
We show that $\alpha_1\ge \beta_1$, $\alpha_2\ge \beta_2$, and $\alpha_3\ge \beta_3$. 
$$
\begin{array}{lllll}
\alpha_1- \beta_1&=&a_2+a_1- (b_0+b_1)= b_0+b_1+2-(b_0+b_1)> 0,\\
\alpha_2-\beta_2&=& \binom{b_0+b_1+2}{2}+(b_0-1)-\left(\binom{b_0+b_1}{2}+b_0\right)=2(b_0+b_1)\ge 0,\\
\alpha_3-\beta_3&=& \binom{b_0+b_1+2}{3}+(b_0-1)(b_1+b_0+1)-\left(\binom{b_0+b_1}{3}+b_0 (b_1+b_0-1)\right)\\
&=& b_0^2+2b_0 b_1 + b_1^2 +b_0 -b_1 -1.
\end{array}
$$
The last expression is negative only if $b_0=0$ and $b_1=1$, which is not possible since every element $B\in \cL_{n-1}$ must contain at least one $0$-coordinate.

\textbf{Case 2:} $b_2 \le a_2-1$ and $b_0\ge a_0+1$. Then
$$
b_0 = b_2+1 \le a_2\ \ \ \ \ \mbox{ and }\ \ \ \ \ b_2 =b_0-1\ge a_0.
$$
so $B^c$ has at most as many $2$'s and at least as many $0$'s as $A$, which implies that there exists a permutation $\pi(B^c)$ of the coordinates of $B^c$ such that $\pi(B^c)\subseteq A$.  This implies that for every $i\in \{1,2,3\}$,
$$|N_{|B|+i}(B)|=  |N_{|B^c|-i}(B^c)| = |N_{|B^c|-i}(\pi(B^c))| \le |N_{|A|-i}(A)|.$$

\end{proof}

\begin{lemma}\label{l_3-compressed}
For every natural number $K\le 3^n$, there exists a $K$-optimal family that is $3$-compressed. 
\end{lemma}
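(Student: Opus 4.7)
The plan is to adapt the bipartite-matching compression argument of Lemma~\ref{l_compressed}, with Claim~\ref{3compressclaim} now playing the role that Claim~\ref{claimfuncond} played there. By the symmetry $C\mapsto C^c$ of $\{0,1,2\}^n$, which interchanges condition (T) with (B) and (C1) with (C2), it suffices to establish (C1); condition (C2) then follows by applying the same argument to the complemented family $\F^c$. Starting from an $M$-optimal family $\F$ that is already top- and bottom-compressed (by Lemma~\ref{l_compressed}), among all such $\F$ I would pick one minimizing the potential $\Phi(\F)\coloneqq\sum_{C\in\F}\bigl||C|-n\bigr|$. The goal is then to show this $\F$ satisfies (C1); otherwise I will produce $\F'$ of size $M$ that is still top- and bottom-compressed, has $\comp(\F')\le\comp(\F)$, and satisfies $\Phi(\F')<\Phi(\F)$, contradicting the minimality of $\Phi$.

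Assuming (C1) fails, I form the bipartite graph $H$ on $(\X,\Y)$, where $\X$ consists of maximal elements of $\F$ at level $n+2$ admitting a (C1) violator, $\Y\subseteq\cL_{n-1}\setminus\F$ consists of the violating subsets $B$ with $b_0>a_0$, and the edges are pairs $(A,B)$ with $B\subseteq A$ and $b_0>a_0$. Running the Hall-theorem case analysis (Cases~1, 2a, 2b) from the proof of Lemma~\ref{l_compressed}, I would extract matched subsets $\A\subseteq\X$, $\B\subseteq\Y$ with a bijection $f:\A\to\B$ and set $\F'\coloneqq(\F\setminus\A)\cup\B$. To verify $\comp(\F')\le\comp(\F)$, I compare neighborhoods level by level: at levels $r<n-1$, bottom-compressedness of $\F$ and $B\notin\F$ give $|N_r(B)\cap\F|=0$; at levels $r\in\{n,n+1\}$, top-compressedness forces $N_r(A)\subseteq\F$, and Claim~\ref{3compressclaim} gives $|N_n(B)|\le|N_{n+1}(A)|$ and $|N_{n+1}(B)|\le|N_n(A)|$, matching $B$'s neighbors at one level against $A$'s at the reflected level as in Lemma~\ref{l_compressed}; at the endpoint levels $r\in\{n-1,n+2\}$, the Hall-based choice of $\A,\B,f$ delivers $e(\B,\F'_{n+2})\le e(\A,\F_{n-1})$ analogous to inequality~(\ref{eq_Levels_a_b}), using $|N_{n+2}(B)|\le|N_{n-1}(A)|$ from Claim~\ref{3compressclaim}.

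The main obstacle I foresee is the contribution from levels $r>n+2$: each $A\in\A$ is maximal in $\F$ and so contributes no neighbors in $\F$ above $n+2$, whereas $B\in\B$ may have such neighbors. My plan is to exploit top-compressedness together with maximality of $A$: any $C\in\F$ with $|C|>n+2$ and $B\subseteq C$ cannot contain $A$ (else $A$ would not be maximal), and top-compressedness then forces the entire down-set $\{D:B\subseteq D\subseteq C,\ |D|\ge n\}$ into $\F$. This ``blow-up'' means each high-level superset of $B$ in $\F$ drags with it many forced elements at levels $n,n+1,n+2$ that are already counted in the Claim~\ref{3compressclaim} inequalities, so a careful re-accounting should absorb the high-level excess into the slack of those inequalities; in the worst case I would augment the swap by also deleting such a $C$ alongside $A$ and inserting an extra suitably placed low-level element alongside $B$, preserving $|\F'|=M$ while still decreasing $\Phi$. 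Once this high-level accounting is settled, the rest---termination (each swap decreases $\Phi$ by $|\A|$, since distance-$2$ elements are traded for distance-$1$ ones), preservation of top- and bottom-compressedness (the swap moves mass only closer to the middle layer), and deducing (C2) by complementation---is routine.
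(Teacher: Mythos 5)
Your strategy is essentially the paper's: start from the top- and bottom-compressed $M$-optimal family produced by Lemma~\ref{l_compressed}, reduce (C2) to (C1) by complementation, build a bipartite graph between levels $n+2$ and $n-1$, rerun the Hall case analysis (Cases 1, 2a, 2b), and substitute Claim~\ref{3compressclaim} for Claim~\ref{claimfuncond}. (The paper's write-up is in fact even terser than yours; it essentially just declares the cases ``the same'' with $G$ restricted to pairs having $b_0\neq a_0$.) But your handling of the two points where the Lemma~\ref{l_compressed} argument does not literally transfer is not sound.

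First, the obstacle at levels $r>n+2$ is real and your proposed repair does not close it. In Lemma~\ref{l_compressed} the high levels were dispatched because $|B|\ge n$: any $C\in\F$ with $|C|>a$ and $B\subsetneq C$ would, via top-compression, force $B\in\F$, contradicting $B\notin\F$. Here $|B|=n-1<n$, so top-compression of a high $C\supsetneq B$ says nothing about $B$, and such $C$ can coexist with $B\notin\F$; choosing $A$ maximal only kills \emph{losses} above level $n+2$, not \emph{gains} $(B,C)$. Moreover the ``blow-up / slack'' heuristic runs the wrong way: a $C\in\F$ at level $\ge n+3$ with $B\subsetneq C$ forces (by top-compressedness) \emph{more} supersets of $B$ at levels $n,n+1,n+2$ into $\F$, which \emph{tightens}, rather than loosens, the inequalities $|N_{n-1+i}(B)\cap\F|\le|N_{n+2-i}(A)|$ you intend to fall back on. The backup of also deleting $C$ and inserting an extra low element changes both the size budget and the level-by-level accounting in ways you have not worked out, and it is not part of the paper's argument either.

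Second, the graph $G$ deliberately omits level-$(n+2,n-1)$ comparable pairs with $b_0=a_0$, because Claim~\ref{3compressclaim} can genuinely fail for them. After the swap, a pair $(B,E)$ with $E\in\F_{n+2}\setminus\A$, $B\subsetneq E$ and $b_0=e_0$ is nonetheless a new comparable pair, while the Hall conclusion in Cases 1 and 2a only says there is no \emph{$G$}-edge between $\B$ and the surviving part of $\F_{n+2}$. Likewise, in the Case~2 inequality $e(\B,\cL_a)\le e(\A,\cL_b)$ the subtraction of $e(\A,\B)$ only yields $e(\B,\G_a)\le e(\A,\F_b)$ if $\A$ has no comparable partners in $\overline{\F_b}\setminus\B$, which fails exactly for the $b_0=a_0$ pairs. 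Your sketch, like the paper's, does not indicate where these uncontrolled level-$(n+2)$ pairs are accounted for, so the claimed inequality $\comp(\F')\le\comp(\F)$ for the swap is not established.
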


\begin{proof}
Let $\F$ be a $K$-optimal family in $\{0,1,2\}^n$ that is top- and bottom-compressed, whose existence is guaranteed by Lemma~\ref{l_compressed}. 
If $\F$ is not $3$-compressed, then at least one of the conditions (C1) and (C2) fails. We assume that (C1) does not hold, keeping in mind that in the other case we can apply the same proof on $\F^c$. Suppose that there exists a comparable pair $(A,B)$ in $\F$ such that $A$ is a maximal element with $|A|=n+2$, $|B|=n-1$, and $b_0>a_0$. To ease notation we write $a=|A|$ and $b=|B|$.  

Let $G$ be a bipartite graph with parts $\F_{a}$ and $\overline{\F_{b}}$ and with edges between comparable pairs $(A,B)$ for which $b_0\neq a_0$. As in the proof of Lemma~\ref{l_compressed}, we can iteratively replace some elements of $\F_{a}$ by elements of $\overline{\F_b}$ without increasing the number of comparable pairs. 
We need to consider several cases based on sizes of $\F_{a}$ and $\overline{\F_b}$ and existence of ``good'' matchings in $G$ that allow us to compress $\F$. 
Since $b<a$, the total value $\sum_{C\in \F} ||C|-n|$ of the family strictly decreases, ensuring that this process will terminate. These cases are the same as in the proof of Lemma~\ref{l_compressed}, except now we only consider matchings in the graph $G$ (in which all pairs with $b_0=a_0$ are removed), and we apply Claim~\ref{3compressclaim} at every place we applied Claim~\ref{claimfuncond} before.


\end{proof}

We are almost ready to tackle Theorem~\ref{mainresult}~(a). We will need to make use of the fact that a typical set in $\{0,1,2\}^n$ of size $n$ has about $n/3$ zeros, $n/3$ ones and $n/3$ twos.

\begin{claim}\label{averagethird}
For every $\eps>10 \left(\frac{1}{1.1}\right)^{0.005n}$,
$$\left|\left\{A\in\cL_{n+1}: \frac{0.9}{3}n \leq a_0 \leq \frac{1.1}{3}n \right\}\right|\geq (1-\eps^2) \ell_{n+1}.$$
\end{claim}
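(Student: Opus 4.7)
The plan is a standard concentration argument that avoids Stirling: I will parametrize $\cL_{n+1}$ by the number of $0$-coordinates, compute the consecutive ratio of the multinomial counts exactly, and then sum a geometric tail. Every $A \in \cL_{n+1}$ satisfies $a_0 + a_1 + a_2 = n$ and $a_1 + 2a_2 = n+1$, forcing $a_2 = a_0+1$ and $a_1 = n - 2a_0 - 1$. Hence the number of $A \in \cL_{n+1}$ with $a_0 = i$ is $f(i) := \binom{n}{i,\,n-2i-1,\,i+1}$ for $0 \leq i \leq \lfloor (n-1)/2\rfloor$, and $\ell_{n+1} = \sum_i f(i)$.

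A direct computation gives
$$\frac{f(i+1)}{f(i)} = \frac{(n-2i-1)(n-2i-2)}{(i+1)(i+2)},$$
a strictly decreasing function of $i$. The crude bounds $f(i+1)/f(i) \leq ((n-2i)/i)^2$ and $f(i+1)/f(i) \geq ((n-2i-2)/(i+2))^2$ then pin down two explicit thresholds. Checking at $i = 0.34n$ yields a ratio of at most $(16/17)^2 < 1/1.1$, so $f(i+1) \leq f(i)/1.1$ for every $i \geq 0.34n$; checking at $i = 0.325n$ yields a ratio of at least $((0.35n-2)/(0.325n+2))^2 > 1.1$ (for $n$ a few hundred and up), so $f(i+1) \geq 1.1\, f(i)$ for every $i \leq 0.325n$.

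The geometric tails are then routine. For $i \geq 1.1n/3 = 11n/30$, which exceeds $0.34n$ by at least $0.026n$, iterating the first threshold gives $f(i) \leq \ell_{n+1}\,(1/1.1)^{i - 0.34n}$, and summing yields $\sum_{i \geq 1.1n/3} f(i) \leq 11\,\ell_{n+1}\,(1/1.1)^{0.026n}$. A symmetric calculation based on the second threshold yields $\sum_{i \leq 0.9n/3} f(i) \leq 11\,\ell_{n+1}\,(1/1.1)^{0.025n}$. Combining, the number of "bad" elements of $\cL_{n+1}$ is at most $22\,\ell_{n+1}\,(1/1.1)^{0.025n}$. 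Since the hypothesis $\eps > 10(1/1.1)^{0.005n}$ gives $\eps^2 > 100(1/1.1)^{0.01n} \geq 22(1/1.1)^{0.025n}$ (the ratio of the two sides being $(100/22)\cdot 1.1^{0.015n} \geq 1$), the claim follows.

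The only real difficulty is arithmetic bookkeeping: the two thresholds $0.325n$ and $0.34n$ must lie strictly inside the target window $[0.3n,\,0.367n]$ with enough room to absorb both the factor $11$ from the geometric series and the factor $100$ built into $\eps^2$. The decay rate $(1/1.1)^{0.025n}$ is comfortably faster than the required $(1/1.1)^{0.01n}$, so no delicate estimate is needed; moreover for $n$ small enough that $n < 451$ the hypothesis on $\eps$ forces $\eps > 1$, making the statement vacuously true.
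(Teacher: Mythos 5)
Your proof is correct and follows essentially the same route as the paper's: parametrize $\cL_{n+1}$ by the number $a_0$ of $0$-coordinates, compute the consecutive ratio $f(i+1)/f(i)$ of the resulting multinomial counts, locate two explicit thresholds at which the ratio crosses $1.1$, and sum the geometric tails. The only differences are cosmetic: you take thresholds $0.325n$ and $0.34n$ where the paper uses $0.31n$ and $1.07n/3$, and your numerical cutoff for vacuity ($n<451$) is safely conservative since the hypothesis $\eps>10(1/1.1)^{0.005n}$ in fact forces $\eps>1$ for all $n$ up to roughly $4800$.
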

\begin{proof}
For every integer $c\geq 0$, let $f(c)\coloneqq |\{A\in\cL_{n+1} : a_0=c\}|$. Note that $f(c)=\binom{n}{c}\binom{n-c}{c+1},$ and hence 
$$\frac{f(c)}{f(c+1)}=\frac{(c+1)(c+2)}{(n-2c-2)(n-2c-1)}.$$
If $c>\frac{1.07}{3} n$ we get $f(c)/f(c+1)>1.1$ and if $c<\frac{0.93}{3} n=0.31n$ we have $f(c)/f(c-1)>1.1$. This means that 
$$\sum_{i\le 0.3n} f(i)\le \frac{1}{1-\frac{1}{1.1}}\cdot f(0.3n)\le 11\cdot \left(\frac{1}{1.1}\right)^{0.01 n}\cdot f(0.31n)\le \frac{\eps^2}{2}\cdot \ell_{n+1}.$$
 A similar computation gives $\sum_{i\ge 1.1n/3} f(i)\le \frac{\eps^2}{2}\cdot \ell_{n+1}$, and the claim follows.
\end{proof}

The next claim shows that for slightly varying values of $M$, the $M$-optimal families contain about the same number of comparable pairs. For an integer $N$, write $\comp(N)$ for the number of comparable pairs in an $N$-optimal family: 
$$\comp(N)\coloneqq \min\{\comp(\F): \F\subseteq \{0,1,2\}^n, |\F|=N\}.$$

\begin{claim}\label{continuouscomp}
If $M\le (1-\eps) \sum_{3}(n,2)$, then $\emph{comp}(M)\le  \emph{comp}(M-1)+\frac{n^2}{4}$.
\end{claim}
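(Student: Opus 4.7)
The plan is to use the inductive hypothesis — that there exists an $(M-1)$-optimal centered family $\F$ — and to construct a family of size $M$ with at most $\comp(M-1)+n^2/4$ comparable pairs by adjoining a single carefully chosen element to $\F$. Since $\F$ is centered and $|\F|=M-1<\Sigma_3(n,2)$, every element of $\F$ lies in the middle three layers $\cL_{n-1}\cup\cL_n\cup\cL_{n+1}$. Consequently, for any candidate $A\in\{0,1,2\}^n\setminus\F$, adjoining $A$ to $\F$ introduces at most
\[
d(A)\;\coloneqq\;\bigl|N(A)\cap(\cL_{n-1}\cup\cL_n\cup\cL_{n+1})\bigr|
\]
new comparable pairs, so it suffices to produce such an $A$ with $d(A)\le n^2/4$.

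I will call an element of the middle three layers \emph{typical} if it lies in $\cL_n$, or lies in $\cL_{n+1}$ with $a_0\ge 0.3n$, or lies in $\cL_{n-1}$ with $a_2\ge 0.3n$. The key numerical step is to establish $d(A)\le n^2/4$ for every typical $A$. For typical $A\in\cL_{n+1}$ this is a direct count: $A$ has no neighbor in $\cL_{n+1}$, exactly $a_1+a_2=n-a_0\le 0.7n$ neighbors in $\cL_n$, and $\binom{a_1+a_2}{2}+a_2\le\binom{0.7n}{2}+(n+1)/2$ neighbors in $\cL_{n-1}$; these sum to at most $0.245\,n^2+O(n)<n^2/4$ once $n$ is large. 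The case $A\in\cL_{n-1}$ follows by the complementation isomorphism $A\mapsto A^c$, and for $A\in\cL_n$ one has $d(A)\le 2n\ll n^2/4$ since $\cL_n$ is an antichain.

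The final step is to exhibit such a typical $A$ outside $\F$. Applying Claim~\ref{averagethird} to $\cL_{n+1}$ and, via complementation, to $\cL_{n-1}$, with a sufficiently small parameter — permitted because $n$ is large — shows that the number of non-typical elements of the middle three layers is at most $\eps\Sigma_3/2$. On the other hand, $|\F|\le(1-\eps)\Sigma_3$ means at least $\eps\Sigma_3$ elements of the middle three layers lie outside $\F$, so at least $\eps\Sigma_3/2>0$ of them are typical. Picking any such $A$ and setting $\F'=\F\cup\{A\}$ gives $|\F'|=M$ and $\comp(\F')\le\comp(M-1)+n^2/4$, completing the proof.

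The main obstacle is the tightness of the arithmetic: since $\binom{0.7n}{2}\approx 0.245\,n^2$ lies only just below $n^2/4$, the typicality threshold $a_0\ge 0.3n$ coming from Claim~\ref{averagethird} is essentially as weak as one can afford, and any looser concentration would defeat the $n^2/4$ bound. Everything else — the containment of $\F$ in the middle three layers, the counting of neighbors, and the averaging step — is bookkeeping once the correct notion of typicality is in place.
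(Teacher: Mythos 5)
Your proposal is correct and follows essentially the same route as the paper: take the $(M-1)$-optimal centered family from the induction hypothesis, use Claim~\ref{averagethird} to find an element in $\cL_{n-1}\cup\cL_n\cup\cL_{n+1}$ with bounded degree ($a_0\ge 0.3n$ in $\cal L_{n+1}$, respectively $a_2\ge 0.3n$ in $\cal L_{n-1}$), and add it. Your degree count $\binom{0.7n}{2}+O(n)<n^2/4$ and the averaging argument showing such an element exists outside the family match the paper's computation of $\binom{2.1n/3}{2}+n\le n^2/4$ and the inequality $M<|\cL_n|+|\G_1|+|\G_2|$.
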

\begin{proof}
By the induction hypothesis, there exists an $(M-1)$-optimal centered family $\G$. Since $M\le (1-\eps) \sum_{3}$, the family $\G$ consists of all elements in layer $\cL_n$ and some elements in layers $\cL_{n-1}$ and $\cL_{n+1}$. 
Define $$\G_1\coloneqq \{B\in \cL_{n+1}:b_0\ge \frac{0.9}{3}n\}\ \ \mbox{ and }\ \ \G_2\coloneqq \{B\in \cL_{n-1}: b_2\ge \frac{0.9}{3} n\}.$$ 
Claim~\ref{averagethird} implies that $|\G_1|, |\G_2| \geq (1-\eps^2) |\cL_{n+1}|$. For $M\le (1-\eps) \Sigma_{3}$ we thus have $M< |\cL_{n}|+|\G_1|+|\G_2|$. Add an element $B\in (\G_1\cup \G_2)\setminus \G$ to $\G$. The element $B$ is in at most $\binom{2.1 n/3}{2}+n +1  \leq \frac{n^2}{4}$ comparable pairs of $\G\cup \{B\}$, hence $$\comp(M)\le \comp(\G\cup \{B\}) \le \comp(\G)+\frac{n^2}{4}= \comp(M-1)+\frac{n^2}{4}.$$
\end{proof}

We are ready to finish the proof of Theorem~\ref{mainresult}~(a). Let $\F$ be an $M$-optimal family that is $3$-compressed, whose existence is guaranteed by Lemma~\ref{l_3-compressed}, and assume that $\F$ is not centered. This can mean one of the following two things: 

\textbf{1.} The first possibility is that there exists an $A\notin \F$ of size $|A|=n$. Since $\F$ is both top- and bottom-compressed, this means that there is no $B\in\F$ with $A\subseteq B$ or $B\subseteq A$, hence unless $\F$ itself is an antichain we may decrease the number of comparable pairs in $\F$ by replacing one of its elements by $A$.

\textbf{2.} The second possibility is that $\cL_n\subseteq \F$ but $\F\not\subseteq \cL_{n-1}\cup \cL_n\cup \cL_{n+1}$. Then there exists an element $A\in\F$ of size at least $n+2$ or at most $n-2$. By symmetry we may assume that there exist $A\in\F$ with $|A|\geq n+2$. Let $A\in\F$ be a maximum element of $\F$. Since $\F$ is $3$-compressed, the number of elements in $\F_{a-1}\cup\F_{a-2}\cup\F_{a-3}$ comparable with $A$ is at least
\begin{equation}\label{number_nbrs}
\begin{array}{llllllllllll}
(a_1+a_2)&+&\left(\binom{a_1+a_2}{2}+a_2\right)&+&\left(\binom{a_1+a_2}{3}+a_2(a_1+a_2-1)-\binom{a_2}{3}\right).\\
\end{array}
\end{equation}
The term $\binom{a_2}{3}$ accounts for the elements of $\cL_{n-1}$ comparable with $A$ that have $a_0$ zeros, which are not necessarily in $\F$ by the definition of $3$-compressed (this case can only occur if $|A|=n+2$). Observe that every such element is formed by decreasing three $2$-coordinates of $A$ to $1$-coordinates, giving $\binom{a_2}{3}$ choices.
Since $a_1+a_2\ge n+2-a_2$, an elementary but somewhat tedious calculation shows that the quantity (\ref{number_nbrs}) is minimized when $a_1=0$ and $a_2=\frac{n+2}{2}$. It follows that this quantity is at least $a_2+\binom{a_2}{2}+a_2^2 \ge  \frac{3}{2}a_2^2 \ge \frac{3}{8} n^2.$ But then $\comp(\F)> \comp(\F \setminus \{A\})+\frac{3 n^2}{8}$, and $\F$ was not $M$-optimal (by Claim~\ref{continuouscomp}), a contradiction.

\hfill\qed

\section{Proof of Theorem~\ref{mainresult} (b)}

Recall that for an integer $a\geq 0$ and a family $\G\subseteq \{0,1,2\}^n$ we have the notation $\G_a=\{A\in\G:|A|=a\}$ and $\cL_a=\cL_a(n,2)$. We say that a centered family $\G\subseteq P$ is \emph{canonical centered} if there exists at most one $\ell\geq 0$ with $0<|\G_\ell|<|\cL_\ell(P)|$, i.e.~if it has at most one partial layer (while centered families could have two).  As in Section~\ref{sectionmaina}, whenever $A$ and $B$ are elements of $\{0,1,2\}^n$, we write $a_0,a_1,a_2$ and $b_0,b_1,b_2$ for the number of $0$-, $1$-, $2$-coordinates in $A$ and $B$ respectively. For an element $A\in\{0,1,2\}^n$ and family $\G\subseteq \{0,1,2\}^n$, we use the notation $\comp(A,\G)\coloneqq |\{B\in\G:B\subsetneq A \text{ or } A \subsetneq B\}|$ and $\Comp(\G)\coloneqq\{(A,B)\in\G\times \G : A\subset B\}$, so that $|\Comp(\G)|=\comp(\G)$.

 Let $X=(0,0,1,1,\ldots,1)\in\cL_{n-2}$, $\B=\{B\in\cL_{n+3}: b_0=0\}$, and $\C\coloneqq \{C\in \{0,1,2\}^n: n-2\leq|C|\leq n+3\}$. Finally, let $\F\coloneqq \C \setminus \left(\B\cup \{X\}\right)$ (see Figure \ref{fig_counterexample1}). Then $\F$ is not centered, but we claim that $\F$ contains fewer comparable pairs than every centered family of size $M=|\F|=\Sigma_6(n,2)-\binom{n}{3}-1$. 
The proof of this claim goes in two stages. First we show that $\F$ contains fewer comparable pairs than the best canonical centered family of this size (Claim~\ref{cl_F}), and next we show that among centered families of this size the canonical families are the best (Lemma~\ref{bestcenterediscanon}). 

\begin{figure}
\begin{center}
\includegraphics{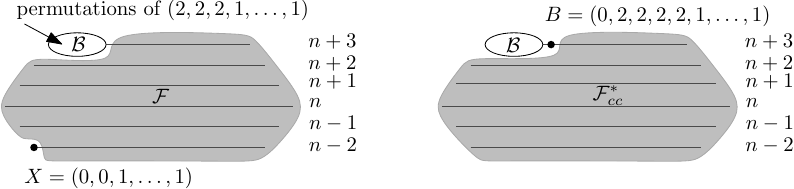}
\caption{A non-centered family $\F$ and a canonical centered family $\F_{cc}^*$ such that $\comp(\F)<\comp(\F_{cc}^*)\le \comp(F_{cc})$ for every canonical centered family $\F_{cc}$ of size $M=\Sigma_6-\binom{n}{3}-1$.}\label{fig_counterexample1}
\end{center}
\end{figure}

\begin{claim}\label{cl_F}
Whenever $\F_{cc}$ is a canonical centered family of size $M=\Sigma_6(n,2)-\binom{n}{3}-1$ we have $\emph{comp}(\F)<\emph{comp}(\F_{cc})$.
\end{claim}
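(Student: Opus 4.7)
The plan is to express both $\F$ and any canonical centered family $\F_{cc}$ of size $M$ as deletions from the common ambient family $\G\coloneqq \C = \cL_{n-2}\cup\cdots\cup\cL_{n+3}$, then compare their comparable-pair counts via the inclusion--exclusion identity
\[
\comp(\G\setminus S) \;=\; \comp(\G) \;-\; \sum_{s\in S}\comp(s,\G) \;+\; \comp(S).
\]
First I would pin down the shape of $\F_{cc}$: since $\Sigma_5<M<\Sigma_6$, centeredness forces $\Sigma_5\subseteq\F_{cc}$, and the remaining $\ell_{n+3}-\binom{n}{3}-1$ elements must all lie in a single partial layer, which is either $\cL_{n-3}$ or $\cL_{n+3}$. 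The order-reversing automorphism $A\mapsto A^c$ preserves $\comp$ and swaps these two layers, so it suffices to treat $\F_{cc}=\G\setminus R$ with $R\subseteq\cL_{n+3}$ and $|R|=\binom{n}{3}+1$.

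Applying the deletion formula to $\F=\G\setminus(\B\cup\{X\})$, one has $\comp(\B\cup\{X\})=\binom{n}{3}$, because $\B$ is an antichain inside $\cL_{n+3}$ and every $B\in\B$ is zero-free and hence sits above $X$; for $\F_{cc}$ one has $\comp(R)=0$ since $R$ lies in one layer. The crucial symmetry is that for $s\in\cL_{n+3}$ the value $\comp(s,\G)$ depends only on $s_0$. Writing $P_k$ for this common value when $s_0=k$, a generating-function computation gives
\[
P_k \;=\; \sum_{r=1}^{5}\,[z^r]\,(1+z)^{n-2k-3}(1+z+z^2)^{k+3},
\]
and a leading-term comparison shows $P_0>P_1>\cdots$ for $n$ sufficiently large. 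Hence $\sum_{s\in R}P_{s_0}$ is maximised when $R=\B\cup\{B^*\}$ for some $B^*$ with $b^*_0=1$, giving the upper bound $\binom{n}{3}P_0+P_1$.

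Putting these pieces together,
\[
\comp(\F_{cc})-\comp(\F) \;\ge\; \comp(X,\G) - P_1 - \binom{n}{3},
\]
so the entire claim reduces to the single inequality $\comp(X,\G)>P_1+\binom{n}{3}$ for $n$ large. To close this, I would compute $\comp(X,\G)=\sum_{r=1}^{5}[z^r]\,(1+z+z^2)^2(1+z)^{n-2}$ by parametrising the elements of $\G$ strictly above $X$: the first two coordinates range freely in $\{0,1,2\}$ while the last $n-2$ are forced into $\{1,2\}$. Both $\comp(X,\G)$ and $P_1$ are degree-$5$ polynomials in $n$ with leading coefficient $1/120$, based at $\binom{n-2}{5}$ and $\binom{n-5}{5}$ respectively; the difference $\binom{n-2}{5}-\binom{n-5}{5}=\Theta(n^4)$ easily dominates both the lower-order corrections and $\binom{n}{3}=O(n^3)$.

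The main obstacle I anticipate is the monotonicity claim $P_0>P_1>\cdots$, since in $(1+z)^{n-2k-3}(1+z+z^2)^{k+3}$ the binomial factor decreases with $k$ while the $(1+z+z^2)^{k+3}$ factor grows. A coefficient-by-coefficient comparison will be needed: one checks that for the small values $r\le 5$ that enter $P_k$, the decreasing $\binom{n-2k-3}{r-j}$ contributions dominate the polynomial-in-$k$ growth of the coefficients of $(1+z+z^2)^{k+3}$ for $n$ large. Beyond this monotonicity check, the rest of the argument is clean bookkeeping built around the deletion formula.
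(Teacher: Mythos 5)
Your proof takes essentially the same route as the paper: identify the optimal canonical centered family by showing that, for $s\in\cL_{n+3}$, the quantity $\comp(s,\G)$ is decreasing in $s_0$, then compare $\F$ with that optimal family via a one-element swap (which your deletion formula packages tidily). The paper does the same, just without generating functions.

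Two remarks on the parts you flagged as potentially delicate. First, the monotonicity $P_0>P_1>\cdots$ that you treat as the main obstacle is in fact immediate: since $(1+z)^2-(1+z+z^2)=z$, one has
\[
(1+z)^{n-2k-3}(1+z+z^2)^{k+3}-(1+z)^{n-2k-5}(1+z+z^2)^{k+4}=z\,(1+z)^{n-2k-5}(1+z+z^2)^{k+3},
\]
whose coefficients are nonnegative, so $P_k-P_{k+1}=\sum_{r=0}^{4}[z^r]\bigl((1+z)^{n-2k-5}(1+z+z^2)^{k+3}\bigr)>0$. No coefficient-by-coefficient comparison is needed. Second, the final estimate as stated is imprecise: the ``lower-order corrections'' to $\binom{n-2}{5}$ and $\binom{n-5}{5}$ coming from the $(1+z+z^2)^{\cdot}$ factors are not lower order --- they contribute at order $n^4$ as well, on the same scale as $\binom{n-2}{5}-\binom{n-5}{5}\sim \tfrac{1}{8}n^4$. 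A genuine $n^4$-coefficient extraction (keeping the $r=5$ and $r=4$ terms of both generating functions) gives $\comp(X,\G)-P_1=\tfrac{1}{24}n^4+O(n^3)$, which is positive and still swamps $\binom{n}{3}$, so your conclusion stands; but the justification ``easily dominates the corrections'' would be wrong if taken literally and should be replaced by the actual coefficient computation.

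Finally, one bookkeeping point that's worth making explicit: when $\F_{cc}$ has its partial layer in $\cL_{n-3}$, it is $\F_{cc}^c$ (not $\F_{cc}$ itself) that is of the form $\G\setminus R$; since $A\mapsto A^c$ preserves $\comp$, this is fine, but the reduction should be stated as passing to $\F_{cc}^c$ rather than as $\G$ being invariant under complementation, which it is not.
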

\begin{proof}

Every canonical centered family $\F_{cc}$ of size $M=\Sigma_6(n,2)-\binom{n}{3}-1$ consists of all elements in levels $\cL_{n-2},\dots, \cL_{n+2}$ and $\ell_{n+3}-\binom{n}{3}-1$ elements in $\cL_{n+3}$ (or $\ell_{n-3}-\binom{n}{3}-1$ elements in $\cL_{n-3}$, in which case the proof is symmetrical). Let $B=(0,2,2,2,2,1,1,\dots,1)\in \cL_{n+3}$ and note that $\F_{cc}^*:=\F\cup \{X\} \setminus \{B\}$ is one of the canonical centered families of size $M$ with the least number of contained comparable pairs. Indeed, removing all elements with no $0$-coordinates plus one element with one $0$-coordinate from $\cL_{n+3}$ ensures the smallest possible number of comparable pairs. This can be seen because it is always better to replace a $2$-coordinate and a $0$-coordinate by two $1$-coordinates, or directly from the formula (\ref{number_nbrs}).

It suffices to show that $\comp(B,\F)<\comp(X,\F)$ since then we can improve $\F\cup \{X\} \setminus \{B\}$ by deleting $X$ and adding $B$. Now, $\comp(X,\F)\ge \binom{n}{5}+\binom{n}{4}-O(n^3)$ whereas $\comp(B,\F)=\binom{n-1}{5}+\binom{n-1}{4}+O(n^3)$, which is $\Theta(n^4)$ smaller than $\comp(X,\F)$ and the claim follows.
\end{proof}

\begin{lemma}\label{bestcenterediscanon}
Among centered families of size $M=\Sigma_6(n,2)-\binom{n}{3}-1$ the function $\emph{comp}(\cdot)$ attains its minimum on a canonical centered family.
\end{lemma}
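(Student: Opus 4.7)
The plan is to parameterize centered families of size $M$ by the distribution of their elements between the equidistant outer layers $\cL_{n-3}(n,2)$ and $\cL_{n+3}(n,2)$, and then use the complement involution to transform any non-canonical centered family into a canonical one via swaps that do not increase $\comp$.

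Since $M=\Sigma_5(n,2)+\ell_{n+3}(n,2)-\binom{n}{3}-1$, every centered family $\F$ of size $M$ must contain the five middle layers $\F_{\mathrm{mid}}:=\cL_{n-2}(n,2)\cup\cdots\cup\cL_{n+2}(n,2)$ in full and distribute the remaining $m:=\ell_{n+3}(n,2)-\binom{n}{3}-1$ elements between $\cL_{n-3}(n,2)$ and $\cL_{n+3}(n,2)$. Writing $\F_{n\pm 3}:=\F\cap\cL_{n\pm 3}(n,2)$, $a:=|\F_{n-3}|$, $b:=|\F_{n+3}|$, $\alpha(A):=|\{C\in\F_{\mathrm{mid}}:A\subset C\}|$, $\beta(B):=|\{C\in\F_{\mathrm{mid}}:C\subset B\}|$, and $T(\F):=|\{(A,B)\in\F_{n-3}\times\F_{n+3}:A\subset B\}|$, we decompose
\[
\comp(\F)=\comp(\F_{\mathrm{mid}})+\sum_{A\in\F_{n-3}}\alpha(A)+\sum_{B\in\F_{n+3}}\beta(B)+T(\F).
\]
The family is canonical iff $a=0$ or $b=0$. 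The complement involution $\phi:A\mapsto A^c$ is order-reversing, stabilises $\F_{\mathrm{mid}}$ setwise, swaps $\cL_{n-3}(n,2)$ with $\cL_{n+3}(n,2)$, and satisfies $\alpha(A)=\beta(A^c)$; applying it to $\F$ yields a centered family of the same size and same $\comp$ with $a,b$ interchanged, so we may assume $a\le b$.

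Assuming $a\ge 1$, we will seek a swap: some $A\in\F_{n-3}$ and $B\in\cL_{n+3}(n,2)\setminus\F$ (which exists since $b\le m-1<\ell_{n+3}(n,2)$) with $\comp(\F-A+B)\le\comp(\F)$, and iterate until the family is canonical. The natural candidate is $B=A^c$ when $A^c\notin\F$: then $\alpha(A)=\beta(A^c)$ cancel and the change reduces to
\[
\Delta(A)=|\{C\in\F_{n-3}\setminus\{A\}:C\subset A^c\}|-|\{B'\in\F_{n+3}:A\subset B'\}|.
\]
In the generic case where $\F_{n-3}^\circ:=\{A\in\F_{n-3}:A^c\notin\F\}$ is nonempty, we will sum $\Delta$ over $\F_{n-3}^\circ$ and use the identity $C\subset A^c\iff A\subset C^c$ to rewrite the first count in terms of $\phi(\F_{n-3})$; a careful comparison with the second count will then exhibit some $A\in\F_{n-3}^\circ$ with $\Delta(A)\le 0$. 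In the residual case $\phi(\F_{n-3})\subseteq\F_{n+3}$ (so $\F_{n-3}^\circ=\emptyset$), every $A\in\F_{n-3}$ pairs with $A^c\in\F_{n+3}$; a direct comparison of $\comp(\F)$ with $\comp(\F_{cc}^*)$ (the best canonical centered family from the proof of Claim~\ref{cl_F}) will show $\comp(\F)\ge\comp(\F_{cc}^*)$, because each complementary pair contributes the doubled term $2\alpha(A)+\mathbf{1}[A\subset A^c]$ to $\comp$ rather than the single $\alpha(A)$ paid by $\F_{cc}^*$ for a corresponding element.

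The main obstacle will be the double-counting step in the generic case, which requires establishing non-positivity of the averaged $\Delta$; this reduces to comparing off-diagonal incidence counts between $\F_{n-3}^\circ$, $\phi(\F_{n-3})\setminus\F_{n+3}$, and $\F_{n+3}\setminus\phi(\F_{n-3})$. Claim~\ref{averagethird}-style concentration ensures that $\alpha$, $\beta$ and the cross-comparison counts are nearly constant on the typical compositions of $\cL_{n\pm 3}(n,2)$, while the $\binom{n}{3}+1$ slack in the size constraint -- corresponding to atypical elements with $b_0\in\{0,1\}$ -- absorbs the atypical contributions and yields the needed inequality.
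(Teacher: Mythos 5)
Your proposal only uses the single complement involution $\phi\colon A\mapsto A^{c}$, whereas the paper's proof of Lemma~\ref{bestcenterediscanon} hinges on the much stronger $\pi$-compression for \emph{every} involution $\pi\in S_n$, replacing $A\in\G_{n-3}$ by $\pi(A^{c})$. This is not a cosmetic difference. In the paper, a minimal family $\G$ must be $\pi$-compressed for all $\pi$, which forces every superset of $A$ lying in $\Pi(A^{c})=\{\pi(A^{c}):\pi^{2}=1\}$ to be in $\G_{n+3}$. For a typical $A\in\G_{n-3}$ that is roughly $\binom{a_0}{3}\binom{a_1}{3}=\Omega(n^{5}\log^{2}n)$ forced comparable pairs, and it is precisely this super-linear count that powers Claims~\ref{gstarsmall}--\ref{cl_Hbounds} and ultimately kills $\G_{n-3}$. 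Your single-complement swap gives you at most one forced pair per $A$ (namely $(A,A^{c})$, and only if $A\subset A^{c}$), which is far too weak to drive any comparable counting argument.

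This shows up concretely as an unproved step in your ``generic case.'' Summing $\Delta$ over $\F_{n-3}^{\circ}$ and using $C\subset A^{c}\iff A\subset C^{c}$, the non-positivity you need reduces (after cancelling the pairs with $B=A^{c}$ and the block $\phi(\F_{n-3})\cap\F_{n+3}$) to
\[
e_{\subset}\!\bigl(\F_{n-3}^{\circ},\ \phi(\F_{n-3}^{\circ})\bigr)\ \le\ e_{\subset}\!\bigl(\F_{n-3}^{\circ},\ \F_{n+3}\setminus\phi(\F_{n-3})\bigr)+\bigl|\{A\in\F_{n-3}^{\circ}:A\subset A^{c}\}\bigr|,
\]
where $e_{\subset}(\X,\Y)$ counts comparable pairs. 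There is no a priori reason for this to hold: the incidence count $|\{B\in\Y:A\subset B\}|$ depends on the \emph{structure} of $\Y$ inside $\cL_{n+3}$, not merely on the composition $(a_0,a_1,a_2)$ of $A$, so invoking Claim~\ref{averagethird}-style concentration does not control these cross-terms. An adversarially structured centered family can make the left-hand side large and the right-hand side small. Your residual case is likewise only sketched: the statement that each complementary pair contributes $2\alpha(A)+\mathbf{1}[A\subset A^{c}]$ ``rather than the single $\alpha(A)$ paid by $\F_{cc}^{*}$'' is not a valid comparison, since the corresponding elements of $\F_{cc}^{*}$ contribute their own $\beta$-values (which are not $\alpha(A)$) and you have not controlled the cross-pairs $T(\F)$ or chosen which elements of $\cL_{n+3}\setminus\F_{n+3}$ replace $\F_{n-3}$. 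The missing ingredient that makes the paper's proof go through is precisely the family of \emph{all} involutive compressions (Claim~\ref{claim_compressionbest}), whose injection argument is structure-free and which then yields the quantitative lower bounds on $\comp(A,\G_{n+3})$ for typical $A$; without it your averaging plan has no source of strength.
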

\begin{proof}
Define a partial order on the collection of centered families of size $M$ by letting $\HH <\HH'$ if $\comp(\HH)<\comp(\HH')$, or if $\comp(\HH)=\comp(\HH')$ and $|\HH_{n+3}| > |\HH'_{n+3}|$. We will show that one of the minimal elements of this partial order is canonical centered, which immediately implies Lemma~\ref{bestcenterediscanon}. 
Let $\G$ be a centered family of size $M=\Sigma_6(n,2)-\binom{n}{3}-1$ that is minimal according to this ordering. Note that $\cL_{n-2}\cup \dots \cup \cL_{n+2} \subseteq \G \subseteq \cL_{n-3}\cup \dots \cup \cL_{n+3}$.

Given a permutation $\pi\in S_n$ of order $2$ (i.e., $\pi^2=1$) define the $\pi$-\emph{compression} of $\G$ by ``replace $A\in \G_{n-3}$ by $\pi(A^c)$ unless it is already in $\G_{n+3}$''. That is,
$$\picomp(\G)=\G \cup \{\pi(A^c) \in \cL_{n+3}: A \in\G_{n-3}\}  \setminus \{A\in \G_{n-3}: \pi(A^c)\not \in\G_{n+3}\}.$$

\begin{claim}\label{claim_compressionbest}
For every $\pi\in S_n$ of order $2$ we have $\picomp(\G)<\G$, unless $\G=\picomp(\G)$. That is, $\pi$-compression improves the family unless it is already $\pi$-compressed.
\end{claim}
\begin{proof}
Note first that unless $\G=\picomp(\G)$ we have that $\picomp(\G)_{n+3}>\G_{n+3}$. It thus remains to show $\comp(\picomp(\G))\le \comp(\G)$. Suppose that $B\subset \pi(A^c)$ is a new comparable pair. Then $A$ was replaced by $\pi(A^c)$, so $A \in \G \setminus \picomp(\G)$. 
The element $B$ was not replaced by $\pi(B^c)$, so $\pi(B^c)\in \G$. Observe that for every $\pi\in\S_n$,  $B\subset \pi(A^c)$ implies $A\subset \pi^{-1}(B^c)$. Since our permutation $\pi$ is of order $2$, we have $\pi^{-1}(B^c)=\pi(B^c)$, and thus $A\subset \pi(B^c)$. Together, for every new comparable pair $B\subset\pi(A^c)$ there is an old comparable pair $A\subset \pi(B^c)$ which got deleted during the compression. This defines an injection from $\Comp(\picomp(\G))\setminus \Comp(\G)$ into $\Comp(\G) \setminus \Comp(\picomp(\G))$ and the claim follows.
\end{proof}

We sketch the idea of the remaining part of the proof. By Claim~\ref{claim_compressionbest} and the minimality of $\G$, the family $\G$ is $\pi$-compressed for all permutations $\pi$ of order $2$. For $A\in \G_{n-3}$, define $$\Pi(A^c)\coloneqq \{\pi(A^c)\in \cL_{n+3}:\pi\in S_n \text{ and } \pi^2=1\},$$ and count the  elements of $\Pi(A^c)$ comparable with $A$. Every such element has to be in $\G_{n+3}$ by definition of $\pi$-compression. To obtain a superset of $A$ in $\Pi(A^c)$, we first need to switch all $0$-coordinates of $A^c$ with some of its $2$-coordinates. After that we can freely switch any of the remaining three $2$-coordinates with any three $1$-coordinates. Any permutation that is formed in this fashion is obviously of order $2$. The number of such permutations is $\binom{a_0}{3} \binom{a_1}{3}$. It follows that if the number of $0$'s and $1$'s in $A$ is (close to) linear in $n$, then the number of elements in $\G_{n+3}$ comparable with $A$ is of order (close to) $n^6$. Therefore, $\G_{n-3}$ cannot have many such elements since otherwise we could replace $\G_{n-3}$ by elements of $\cL_{n+3}\setminus \G$ and the number of comparable pairs would decrease. We partition $\G$ into $\G'$, $\G''$, and $\G^*$ as follows:
$$\G' = \left\{A\in \G_{n-3} : a_2\leq n^{2/3}\log n\right\}\mathclose{},\ \G'' = \left\{A\in\G_{n-3}: a_2\geq \frac{n}{2}-n^{2/3}\log n\right\}\mathclose{}, \ \G^* = \G_{n-3} - \left(\G' \cup \G''\right)\mathclose{}.$$
Observe that $\G'$ contains elements with a small number of $0$- and $2$-coordinates while $\G''$ contains elements with small number of $1$-coordinates. Claim~\ref{gstarsmall} states that there cannot be more elements in $\G^*$ than in $\G'\cup \G''$. Claim~\ref{cl_GH} uses a similar averaging argument to bound 
$|\G'\cup \G''|$ by $2|\HH'\cup \HH''|$, where $\HH' \cup \HH''$ is the family of sets in $\G'\cup \G''$ that are in a small number of comparable pairs in $\G$.  Claim~\ref{cl_Hbounds} then implies that $\HH' \cup \HH''$ must be empty, and we conclude that $\G$ is canonical centered.

\begin{claim}\label{gstarsmall}
$|\G_{n-3}|\leq 2|\G'\cup \G''|.$
\end{claim}
\begin{proof}
Note that this claim is equivalent to the inequality $|\G^*|\leq|\G'\cup\G''|$. Let $A$ be an element of $\G^*$ and consider all its supersets of the form $\pi(A^c)$ with $\pi^2=1$. Since $\G$ is $\pi$-compressed for every involution $\pi$, we know that all these supersets are in $\G$. Let $\Pi_A $ be the set of a permutations $\pi$ of order $2$ such that each $\pi$ switches all $0$-coordinates of $A^c$ with all but three of its $2$-coordinates, and the remaining three $2$-coordinates with three arbitrary $1$-coordinates. Equivalently, for every $\pi\in \Pi_A$, the element $\pi(A^c)$ is formed from $A$ by increasing three $0$-coordinates and three $1$-coordinates by one. We thus always have $A\subset \pi(A^c)$, and hence the number of supersets of $A$ in $\Pi(A^c)$ is at least $|\Pi_A|=\binom{a_0}{3}\binom{a_1}{3}$. Since $A\not \in G'\cup\G''$ and $a_0=a_2+3$, we have
$$n^{2/3} \log n +3 \le a_0\le \frac{n}{2} - n^{2/3} \log n+3.$$
From $a_0+a_1+a_2=n$ we have $a_1=n-2a_0+3$, and thus
$$a_1\ge n-2\left(\frac{n}{2}-n^{2/3} \log n+3\right)+3=2n^{2/3} \log n -3.$$
As either $a_0$ or $a_1$ is larger than $n/10$, we have
$$|\Pi_A|=\binom{a_0}{3}\binom{a_1}{3}\geq n^5\log^2 n.$$
We claim that the elements of $\G_{n-3}$ are in at most $n^5$ comparable pairs each on average. Indeed, otherwise we could replace $\G_{n-3}$ by an arbitrary subset of $\overline{\G_{n+3}}=\cL_{n+3}\setminus \G_{n+3}$ of size $|\G_{n-3}|$ and obtain a canonical centered family with a smaller number of comparable pairs. Because each element of $\G^*$ is in at least $n^5 \log^2 n$ comparable pairs, we have $|\G^*|\le |\G'\cup \G''|$, and the claim follows.
\end{proof}
Let $$\HH' = \left\{A\in\G' : \comp(A,\G)\leq 2n^5\right\} \ \ \ \mbox{ and }\ \ \ \HH'' = \left\{A\in\G'' : \comp(A,\G)\leq 2n^5\right\}\mathclose{}. $$ 

\begin{claim}\label{cl_GH}
$|\G' \cup \G''|\leq 2|\HH'\cup \HH''|.$
\end{claim}
\begin{proof}
As before, the elements of $\G_{n-3}$ must be in at most $n^5$ comparable pairs each on average since otherwise we could replace $\G_{n-3}$ by an arbitrary subset of $\overline{\G_{n+3}}$. Recall that the family $\G_{n-3}$ is partitioned into $\G'$, $\G''$ and $\G^*$, and that every element of $\G^*$ is in at least $n^5 \log^2 n$ comparable pairs of $\G$ (see proof of Claim~\ref{gstarsmall}). We thus necessarily have $|\G'\cup \G''|\le 2 |\HH' \cup \HH''|$.
\end{proof}

\begin{claim}\label{cl_Hbounds}
$$|\HH'|,|\HH''| \leq  \frac{\log^8n}{n^2} \cdot |\overline{\G_{n+3}}|.$$
\end{claim}

\begin{proof}
We first count the number $E''$ of comparable pairs $(A,B)\in \HH'' \times \overline{\G_{n+3}}$ such that $a_2=b_2$. We count $E''$ two ways:
\begin{enumerate}[itemsep=2pt,parsep=2pt,topsep=2pt,partopsep=2pt]
\item Let $A\in\HH''\subseteq \G''$. Then $a_0=a_2+3\ge \frac{n}{2}-n^{2/3}\log n+3$ by the definition of $\G''$. We need to count the number of 
sets $B\in \overline{\G_{n+3}}$ formed from $A$ by increasing six of its $0$-coordinates to $1$-coordinates. Since $\comp(A,\G_{n+3})\le 2n^5$ by the definition of $\HH''$, this number is at least 
$\binom{a_0}{6}-2n^5 \ge \binom{n/3}{6}\geq n^6/10^{9}.$

\item Let now $B\in \overline{\G_{n+3}}$ for which there exists an $A\in \HH''$ with $a_2=b_2$. Then 
$$b_1=n+3-2b_2=n+3-2a_2\le n+3-2\left(\frac{n}{2}-n^{2/3}\log n\right)\le 3n^{2/3}\log n.$$
Therefore, the number of sets $A$ formed from $B$ by decreasing six of its $1$-coordinates to $0$-coordinates is 
at most $\binom{3n^{2/3}\log n}{6}\leq n^4\log^7n.$
\end{enumerate}
Together we obtain 
\begin{equation}\label{eq_H''} 
|\HH''|\cdot \frac{n^6}{10^{9}}\leq E'' \leq |\overline{\G_{n+3}}| \cdot  n^4\log^7n,
\end{equation}
and the second inequality in Claim~\ref{cl_Hbounds} follows.

Similarly, we count the number $E'$ of comparable pairs $(A,B)\in \HH' \times \overline{\G_{n+3}}$ such that $a_0=b_0$.
\begin{enumerate}[itemsep=2pt,parsep=2pt,topsep=2pt,partopsep=2pt]
\item Let $A\in\HH'\subseteq \G'$. Then $a_1=n-3-2a_2\ge n-3-2n^{2/3} \log n$ by the definition of $\G'$. The number of 
sets $B\in \overline{\G_{n+3}}$ formed from $A$ by increasing six of its $1$-coordinates to $2$-coordinates is at least 
$\binom{a_1}{6}-2n^5 \ge \binom{n/3}{6}\geq n^6/10^{9}.$

\item Let now $B\in \overline{\G_{n+3}}$ for which there exists an $A\in \HH'$ with $a_0=b_0$. Then 
$b_2=a_2+6\le n^{2/3}\log n+6.$
Therefore, the number of sets $A$ formed from $B$ by decreasing six of its $1$-coordinates to $0$-coordinates is 
at most $\binom{n^{2/3}\log n+6}{6}\leq n^4\log^7n.$
\end{enumerate} 
Similarly to \ref{eq_H''} we have $$|\HH'|\cdot \frac{n^6}{10^{9}}\leq E' \leq |\overline{\G_{n+3}}| \cdot  n^4\log^7n,$$
and the first inequality in Claim~\ref{cl_Hbounds} follows.
\end{proof}
 
We are ready to finish the proof of Lemma~\ref{bestcenterediscanon}. Applying the previous three claims, we obtain
$$|\G_{n-3}|\stackrel{C\ref{gstarsmall}}{\le} 2 |\G' \cup \G''|\stackrel{C\ref{cl_GH}}{\leq} 4 |\HH' \cup \HH''| \stackrel{C\ref{cl_Hbounds}}{\leq} \frac{\log^9n}{n^2}\cdot  |\overline{\G_{n+3}}|= \frac{\log^9n}{n^2}\left(|\G_{n-3}|+\binom{n}{3}+1\right)\mathclose{},$$
and therefore
\begin{equation}\label{eq_contra}
|\G_{n-3}|\leq n\log^{10}n.
\end{equation}
Assume that $\HH''\neq 0$ and let $A\in \HH''$. As in the proof of Claim~\ref{cl_Hbounds}, $\comp(A,\overline{\G_{n+3}}) \ge \binom{a_0}{6}-2n^5 \geq n^6/10^{9},$ and so $\overline{|\G_{n+3}}|\ge n^6/10^{9}$.
This implies $|\G_{n-3}|\geq n^6/10^{10}$, which contradicts equation \eqref{eq_contra}. By the same argument we have $\HH'=\emptyset$. Hence $\G_{n-3}=\emptyset$ by Claims~\ref{gstarsmall} and \ref{cl_GH}, 
and we conclude that $\G$ is canonical centered, proving the lemma.
\end{proof}

\section{Proof of Theorem~\ref{largeresult}} 

Let $P=\{0,1,\ldots,k\}^n$ where $k$ is a fixed constant, $0<\eps<0.01$, and $n$ be sufficiently large so that all following estimates hold. We are given an integer $j$ with $(1+\eps)\log_2 n \leq j\leq \sqrt{n}/\log_2n$ and we have $M=\Sigma_j(n,k)$. For simplicity we will assume $nk+j$ is even, the odd case is very similar, and we omit the details. 
Let $$\F\coloneqq \left\{A\in P : \frac{nk-j}{2}< |A|\leq \frac{nk+j}{2}\right\}\mathclose{}.$$ 
Let $B$ be such that $|B|=\frac{nk+j}{2}$ and every coordinate of $B$ is either $\lfloor \frac{k}{2}\rfloor$ or $\lfloor \frac{k}{2}+1\rfloor$. Let $C$ be such that $|C|=\frac{nk+j}{2}+1$ and every coordinate of $C$ is $k$ or $0$, except possibly one. Note that $C$ has at least $\frac{n-j}{2}$ zeros and at most $\frac{n+j}{2}$ non-zeros.
\begin{figure}
\begin{center}
\includegraphics{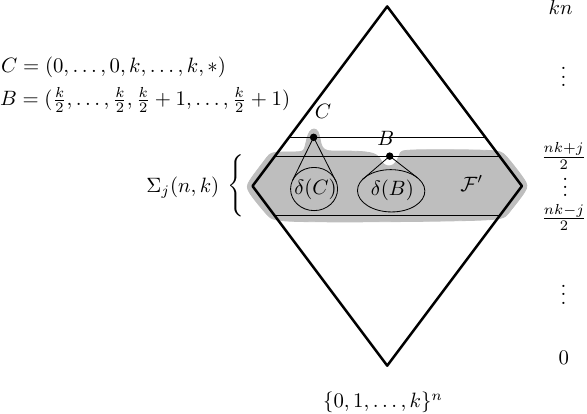}
\caption{A non-centered family $\F'\subseteq \{0,\dots, k\}^n$ which has smaller number of comparable pairs than the centered family $\F$.}\label{fig_poset6}
\end{center}
\end{figure}
 Now define $$\F'\coloneqq \F \cup \{C\} \setminus \{B\},$$ so that $\F'$ is not a centered family (see Figure~\ref{fig_poset6}). We claim that $\comp(\F')<\comp(\F)$. We only need to compare the number of subsets of $B$ and $C$ that are contained in $\F$ (or $\F'$). For a set $D$ and an integer $\ell$, write $$\delta_\ell(D)\coloneqq \{A\in\F: A\subseteq D,~  |A| = |D| - \ell\},$$ that is, the collection of subsets of $D$ that are in $\F$, and are $\ell$ levels below $D$. Let $$\delta(D)=\bigcup_{\ell=0}^n \delta_\ell(D).$$ We have the estimate
$$|\delta(B)|=\sum_{\ell=0}^{j-1}|\delta_\ell(B)|>|\delta_{j-1}(B)|>\binom{n}{j-1}\mathclose{}.$$
Note that for $0\leq \ell\leq j$ we have $$|\delta_\ell(C)|\leq \binom{\frac{n+j}{2}+\ell - 1}{\ell} = (1+o(1))\binom{\frac{n+j}{2}}{\ell},$$ since the right hand side of the first inequality counts the number of non-negative solutions to the equation $a_1+\ldots + a_{(n+j)/2} = \ell$. Hence we get
\begin{equation*}
\begin{split}
|\delta(C)|&=\sum_{\ell=0}^{j}|\delta_\ell(C)|\leq(1+o(1))\sum_{\ell=0}^{j}\binom{\frac{n+j}{2}}{\ell} \leq 2\binom{(0.5+\eps^{3/2})n}{j}\\ & \leq n\cdot (0.5+\eps^{4/3})^{(1+\eps)\log_2 n}\binom{n}{j-1}<|\delta(B)|,
\end{split}
\end{equation*}
where the last inequality holds because $(0.5+\eps^{4/3})^{1+\eps}<\frac{1}{2}$ for $\eps<0.01$.
Hence $\comp(\F')<\comp(\F)$ and this completes the proof.
$~\hfill \qed$

\section{Proof of Theorem~\ref{subspacebs}}
Recall that $\RSU(n)$ denotes the collection of posets of rank $n$ that are rank-symmetric and rank-unimodal, and let $P\in\RSU(n)$.
 Furthermore, recall that $|A|$ denotes the rank of an element $A\in P$, $\comp(A,\G)\coloneqq |\{B\in\G:B\subset A \text{ or } A \subset B\}|$, and $N_r(A)\coloneqq \{B: |B|=r, B\subseteq A \text{ or } A\subseteq B\}$. 

A poset $P$ of rank $n$ has \emph{property} $(Q)$ if all of the following hold:
\begin{itemize}[itemsep=2pt,parsep=2pt,topsep=2pt,partopsep=2pt]
\item[(Q1)] If $|B|<|A|$ and $||B|-n/2|<||A|-n/2|$, then $|N_{|B|+i}(B)|\leq |N_{|A|-i}(A)|$ for every $i\in \{1,\ldots, |A|-|B|\}$.
\item[(Q2)] If $|B|>|A|$ and $||B|-n/2|<||A|-n/2|$, then $|N_{|B|-i}(B)|\leq |N_{|A|+i}(A)|$ for every $i\in \{1,\ldots, |B|-|A|\}$.
\item[(Q3)] If $n/2\leq |B|<|A|$, then $|N_{|B|-i}(B)|\leq |N_{|A|-i}(A)|$ for every $i\geq 1$.
\item[(Q4)] If $n/2\geq |B|>|A|$, then $|N_{|B|+i}(B)|\leq |N_{|A|+i}(A)|$ for every $i\geq 1$.
\end{itemize}


The key result of this section is the lemma below, which will easily imply Theorem~\ref{subspacebs}.

\begin{lemma}\label{kleitmanstatedgenerally}
If a rank-symmetric and rank-unimodal poset $P$ of rank $n$ has Property (Q), then $P$ has the centeredness property.
\end{lemma}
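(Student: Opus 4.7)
My plan is to follow the compression-based proof of Lemma~\ref{l_compressed}, with Property~(Q) of $P$ replacing the explicit neighborhood-size estimates that were specific to $\{0,1,2\}^n$. The proof is by induction on $M$; the base case $M\le \ell_{\lfloor n/2\rfloor}(P)$ is trivial, since the middle layer is itself an antichain of size at least $M$.

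In the first stage I would show that there exists an $M$-optimal family $\F\subseteq P$ that is \emph{top-compressed} and \emph{bottom-compressed} in the natural sense: whenever $A\in\F$ with $|A|>n/2$ and $B\subsetneq A$ with $n/2\le|B|<|A|$ then $B\in\F$, and the dual. Among all $M$-optimal families, pick one minimizing $\sum_{A\in\F}\bigl||A|-n/2\bigr|$. If top-compression fails, choose a violating pair $(A,B)$ with $|A|=a$ maximal and $|A|-|B|=a-b$ minimal; then every element comparable with $A$ in the intermediate levels $\cL_{b+1},\dots,\cL_{a-1}$ is in $\F$. Now carry out the bipartite-graph / Hall's-theorem case analysis of Lemma~\ref{l_compressed} on $(\F_a,\overline{\F_b})$ verbatim. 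The only places where the poset structure enters that proof are the neighborhood-size inequalities $|N_{b+i}(B)|\le|N_{a-i}(A)|$ for $i\in\{1,\dots,a-b\}$, and these are supplied directly by Property~(Q1). Bottom-compression follows by the symmetric argument with Property~(Q2).

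In the second stage I would show that a top- and bottom-compressed $M$-optimal family must in fact be centered. Among such families pick one further minimizing the total displacement $\sum_{A\in\F}\bigl||A|-n/2\bigr|$. If some middle-layer element $C$ is missing from $\F$, then by top- and bottom-compressedness $C$ is incomparable with every element of $\F$, so swapping the outermost element of $\F$ for $C$ strictly decreases the displacement without creating any new comparable pair. Hence we may assume $\cL_{\lfloor n/2\rfloor}\subseteq\F$. If $\F$ is still not centered then there exist $A\in\F$ and $B\notin\F$ with $\bigl||B|-n/2\bigr|<\bigl||A|-n/2\bigr|$; by rank-symmetry we may assume $|A|>n/2$. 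Re-run the Hall-theorem analysis of Lemma~\ref{l_compressed} on $(\F_a,\overline{\F_b})$. Comparisons in the intermediate levels, and in levels $a$ and $b$ themselves, are again handled by Property~(Q1) (or (Q2), depending on which side of the middle $B$ lies on); the new ingredient is that, because $B$ is closer to the middle than $A$, one must additionally control $B$'s shadow in the layers strictly beyond $|B|$ on the middle side, which is exactly what Property~(Q3) provides when $|B|\ge n/2$ (and (Q4) when $|B|<n/2$, combined with bottom-compressedness). The resulting exchange produces an $M$-optimal top- and bottom-compressed family of strictly smaller displacement, contradicting our choice of $\F$.

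The main obstacle will be the bookkeeping in the second stage: the ranges controlled by (Q1)/(Q2) (intermediate levels) and by (Q3)/(Q4) (levels beyond $|B|$ on the middle side) must combine without double-counting any neighborhood in $\F$. The cleanest way to avoid this is to package the exchange as a single Hall-matching argument on $(\F_a,\overline{\F_b})$ rather than as a sequence of one-for-one swaps, so that all neighborhood-size inequalities from Property~(Q) can be applied simultaneously. All four parts of Property~(Q) are genuinely needed: removing any one of them leaves a range of levels in which the swap cannot be certified to be safe.
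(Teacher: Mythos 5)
Your Stage 1 (top- and bottom-compression via Hall matchings, with Property (Q1)/(Q2) replacing the explicit shadow estimates) is essentially the same as the paper's Claim~\ref{l_compressed_gen}, which establishes the slightly stronger notion of a \emph{mid-compressed} family. The problem is in Stage~2, and it is a genuine gap, not just bookkeeping.

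Once $\F$ is top- and bottom-compressed and contains all of $\cL_{\lfloor n/2\rfloor}$, consider a remaining ``violation'': $A\in\F$, $B\notin\F$, $|A|>n/2$, and $\bigl||B|-n/2\bigr|<\bigl||A|-n/2\bigr|$ with $|B|\ge n/2$. Top-compressedness says that every superset in $\F$ of an element of $\overline{\F_b}$ at a level above $n/2$ would force that element into $\F$; consequently no element of $\F_a$ is comparable with any element of $\overline{\F_b}$. The bipartite graph on $(\F_a,\overline{\F_b})$ is therefore \emph{empty}, and the Hall-theorem analysis of Lemma~\ref{l_compressed} returns a vacuous exchange. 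You get no new family and no contradiction, yet $\F$ may still fail to be centered (the missing $B$ is incomparable with the far-out $A$). This is precisely the residual case that compression alone cannot reach. Your description of the role of (Q3)/(Q4) also reveals the misconception: in a Hall exchange one always has $B\subset A$, so $B$'s downward shadow is automatically contained in $A$'s and (Q3)/(Q4) are never needed there; the paper invokes (Q3)/(Q4) exactly to compare shadows of two \emph{incomparable} elements.

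The paper closes this case with an argument of a different nature: it introduces $\Delta(\G)$, the maximum degree in the comparability graph of the centered family $\G$ of size $\Sigma_j(P)$, proves the continuity bound $\comp(\F^*)\le\comp(M-1)+\Delta(\G)$ by appending one element of $\G$ to the $(M-1)$-optimal centered family supplied by the induction hypothesis (Claim~\ref{lastcontinuousclaim}), and then uses mid-compressedness together with (Q3)/(Q4) to show that the single far-out element $A$ already contributes $\comp(A,\F^*)\ge\Delta(\G)$ comparable pairs, so $\comp(\F^*)\ge\comp(M-1)+\Delta(\G)$ as well. This shows a centered family attains the minimum without exhibiting an explicit swap. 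Your proposal, by insisting on a second Hall exchange rather than this global counting step, cannot certify optimality of the centered family in the case where the exchange graph is empty, and it also never invokes the induction hypothesis in Stage~2, which is essential to the paper's proof. To repair the argument you would need to replace Stage~2 with a comparison of $\comp(A,\F^*)$ against $\Delta(\G)$ of the kind the paper carries out.
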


\begin{proof}
Suppose $P\in\RSU(n)$  has Property (Q). We say that a family $\F\subseteq P$ is \emph{mid-compressed} if for every comparable pair $(A,B)\in \Comp(\F)$ such that $||B|-n/2|<||A|-n/2|$, $A\in\F$ implies $B\in \F$.

\begin{claim}\label{l_compressed_gen}
For every $M\in \{1,\dots, |P|\}$, there exists an $M$-optimal family in $P$ that is mid-compressed.
\end{claim}

\begin{proof}
The proof of this claim is essentially the same as Kleitman's proof~\cite{kleitman} of Theorem~\ref{kleitmanthm} and hence similar to our proof of Lemma~\ref{l_compressed}, so we only give a sketch here. We show by induction on $M$ that there exists an $M$-optimal family that is centered. The base case is $M\le \Sigma_{1}(n,k)$, in which case there exists an antichain in $\cL_{n/2}$ of size $M$. 

Now let  $M> \Sigma_{1}(n,k)$, and define an order relation on the collection of subsets of $P$ of order $M$ by setting $\G < \F$ if

\begin{itemize}[itemsep=2pt,parsep=2pt,topsep=2pt,partopsep=2pt]
\item $\comp(\G)<\comp(\F)$, or
\item $\comp(\G)=\comp(\F)$ and $\sum_{G\in \G}||G|-n/2|<\sum_{F\in\F}||F|-n/2|$.
\end{itemize}
Given a family  $\F\subset P$ of size $M$ that is not mid-compressed we will find a family $\G$ of size $M$ that improves $\F$ (that is, $\G<\F$). Since only mid-compressed families cannot be improved this way this will show that there exists an $M$-optimal mid-compressed family. 

Let $\F\subset P$ be a family of size $M$ that is not mid-compressed. Then there exist elements $A$ and $B$ such that $A\in \F$, $B\notin \F$, and $||B|-n/2|<||A|-n/2|$. Without loss of generality we may assume that there exists such a pair with $|A|>n/2$. Among all such pairs $(A,B)$ consider the pairs with $|A|$ is maximal, and then among these pick one with $|B|$ maximal. Note that this implies that whenever $C\in P$ is such that $C\subset A$ and $|C|>|B|$ then $C\in\F$. Moreover whenever $C\in P$ is such that $B\subset C$ and $|C|>|A|$ then $C\notin \F$. Let $a\coloneqq |A|$  and $b\coloneqq |B|$.

Form a bipartite graph with vertex sets $\F_{a}$ and $\overline{\F_{b}}$ with edges between comparable pairs. If there exists a matching $f$ between $\F_{a}$ and $\overline{\F_{b}}$ covering $\F_{a}$, then replacing $\F_{a}$ with the matching elements $f(\F_{a})$ does not increase the number comparable pairs in $\F$ (since $P$ has Property (Q1)), but decreases $\sum_{F\in\F}||F|-n/2|$ and hence improves the family. From now on suppose that there is no such matching. Let $\X=\F_{a}$ and let $\Y$ be the family of neighbors of $\F_{a}$ in $\overline{\F_{b}}$. 

\textbf{Case 1:} $|\X|\leq |\Y|$. Since there is no matching between $\X$ and $\Y$ covering $\X$, we can find a maximal vertex set $\X_0\subset \X$ such that $|N(\X_0)|<|\X_0|$. Let $f$ be a matching between $\F_{a}-\X_0$ and $\Y - N(\X_0)$ covering $\X- \X_0$, which exists by the maximality of $\X_0$. Then $\G\coloneqq \F\cup f(\X-\X_0)- (\X- \X_0)$ satisfies $\G<\F$ (again using that $P$ has Property (Q1)).

\textbf{Case 2:} $|\X|>|\Y|$. If there exists a matching $f$ covering $\Y$ then replacing $f(\Y)$ by $\Y$ improves $\F$. Otherwise, let $\Y_0\subset \Y$ be minimal such that $|N(\Y_0)|<|\Y_0|$. Consider the following two cases:

\begin{enumerate}[itemsep=2pt,parsep=2pt,topsep=2pt,partopsep=2pt]
\item[a)] If there is a matching $f$ between $\Y_0$ and $N_{\X}(\Y_0)$ covering $N(\Y_0)$, then let $\G\coloneqq(\F\setminus N_{\X}(\Y_0)) \cup f(N_{\X}(\Y_0))$.
 Since there is no edge between $f(N_{\X}(\Y_0))$ and $\F_a$, we have $\comp(\G)<\comp(\F)$.

\item[b)] Otherwise, there exists a vertex set $\Z\subseteq N_{\X}(\Y_0)$ with $|N_{\Y_0}(\Z)|<|\Z|$. Then $\Y_0'\coloneqq \Y_0 \setminus N_{\Y_0}(\Z)$ is smaller than $\Y_0$ and it is easy to check that $|N_{\X}(\Y_0)|< |\Y_0|$, a contradiction with minimality of $\Y_0$.
\end{enumerate}
This finishes the proof of the claim that there exists an $M$-optimal mid-compressed family. 
\end{proof}

From now on we assume that there exists an $M$-optimal mid-compressed family $\F^*$ that is not centered. Recall that $\Sigma_r(P)$ denotes the total size of the middle $r$ layers of $P$. Define the integer $j\geq 0$ such that $\Sigma_{j-1}(P)<M\leq \Sigma_j(P)$. 
Let $\G\subset P$ be the centered family of size $\Sigma_j(P)$ and write $\Delta(\G)\coloneqq \max\{\comp(A,\G):A\in \G\}$ for the maximum degree of the graph with vertex set $\G$ and edges corresponding to comparable pairs in $P$. Let $\comp(M-1)\coloneqq \min\{\comp(\F):\F\subseteq P, |\F|=M-1\}$.
The following statement is very similar to Claim~\ref{continuouscomp}:
\begin{claim}\label{lastcontinuousclaim}
We have $\emph{comp}(\F^*)\leq \emph{comp}(M-1)+\Delta(\G)$.
\end{claim}
\begin{proof}
It suffices to construct a family $\F$ of size $M$ with at most $\comp(M-1)+\Delta(\G)$ comparable pairs. As $\F^*$ is $M$-optimal it contains at most this many comparable pairs. By induction we know there exists a centered $(M-1)$-optimal family $\HH$. Since $\HH\subset \G$, adding to it any element of $\G\setminus \HH$ increases the number of comparable pairs by at most $\Delta(\G)$.
\end{proof}

Since $\F^*$ is not centered, it contains an element $A$ such that for all elements $B\in\G$ we have $||A|-n/2|>||B|-n/2|$. Since $\F^*$ is mid-compressed and $P$ has properties (Q3) and (Q4), this implies that $\comp(A,\F^*)\geq\Delta(\G)$. Hence $\comp(\F^*)\geq \comp(M-1)+\Delta(\G)$. By Claim~\ref{lastcontinuousclaim} this implies that every family of size $M$ contains at least $\comp(M-1)+\Delta(\G)$ comparable pairs. As shown in the proof of Claim~\ref{lastcontinuousclaim} this value can be achieved by a centered family, completing the proof of Lemma~\ref{kleitmanstatedgenerally}.
\end{proof}

One well-known poset that satisfies the assumptions of Lemma~\ref{kleitmanstatedgenerally} is the Boolean lattice $\P(n)$. Therefore, Lemma~\ref{kleitmanstatedgenerally} implies Theorem~\ref{kleitmanthm}---rather unsurprisingly since the proof of Lemma~\ref{kleitmanstatedgenerally} was motivated by Kleitman's proof of Theorem~\ref{kleitmanthm}. 

Let $q$ be a prime power and let $n\ge 1$. To finish the proof of Theorem~\ref{subspacebs}, we only need to check that the assumptions of Lemma~\ref{kleitmanstatedgenerally} hold for $\mathcal{V}(q,n)$.

\begin{claim}\label{vqnranksym}
$\mathcal{V}(q,n)$ is rank-symmetric.
\end{claim}
\begin{proof}
The map $V\mapsto V^\perp$ takes the set of subspaces of dimension $k$ into the set of subspaces of dimension $n-k$ bijectively. 
\end{proof}

\begin{claim}
$\mathcal{V}(q,n)$ is rank-unimodal.
\end{claim}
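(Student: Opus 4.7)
The plan is to verify rank-unimodality by directly computing the ratio of consecutive Gaussian binomial coefficients and showing this ratio crosses $1$ exactly once, at $k\approx n/2$. Recall that the number of elements of $\mathcal{V}(q,n)$ of rank $k$ is $\ell_k={n\brack k}_q=\prod_{j=0}^{k-1}\frac{1-q^{n-j}}{1-q^{j+1}}$.

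First I would form the ratio
$$\frac{\ell_{k+1}}{\ell_k}=\frac{{n\brack k+1}_q}{{n\brack k}_q}=\frac{1-q^{n-k}}{1-q^{k+1}}=\frac{q^{n-k}-1}{q^{k+1}-1},$$
where the last equality uses that both numerator and denominator of the middle fraction are negative for $0\le k<n$ (since $q\ge 2$), and telescoping the product definition leaves only these two factors.

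Next I would observe that since the function $x\mapsto q^x-1$ is strictly increasing on $\mathbb{R}$, we have $\ell_{k+1}/\ell_k\ge 1$ iff $n-k\ge k+1$, i.e.~iff $k\le (n-1)/2$, and $\ell_{k+1}/\ell_k\le 1$ iff $k\ge (n-1)/2$. Thus $\ell_0\le \ell_1\le\cdots\le \ell_{\lfloor n/2\rfloor}$ and $\ell_{\lfloor n/2\rfloor}\ge\ell_{\lfloor n/2\rfloor+1}\ge\cdots\ge\ell_n$, which is the definition of rank-unimodality (with $j=\lfloor n/2\rfloor$). There is no serious obstacle here; the only thing to be careful about is the sign handling when writing $(1-q^{n-k})/(1-q^{k+1})$ as $(q^{n-k}-1)/(q^{k+1}-1)$, but this is immediate.
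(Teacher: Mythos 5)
Your proof is correct and takes essentially the same approach as the paper: the paper rewrites ${n\brack k}_q$ as $[n]!/([k]![n-k]!)$ with $[i]=q^i-1$ and asserts unimodality "is easily seen to follow," which amounts to the same consecutive-ratio computation $\ell_{k+1}/\ell_k=(q^{n-k}-1)/(q^{k+1}-1)$ that you carry out explicitly. You have simply filled in the sign bookkeeping and the threshold $k\le(n-1)/2$ that the paper leaves implicit.
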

\begin{proof}
Note that the number of subspaces of $\mathcal{V}(q,n)$ of dimension $k$, written as ${n\brack k}_q$, can be expressed as (see e.g.~\cite{stanley}):
$${n\brack k}_q=\frac{[n]!}{[k]![n-k]!},$$
where 
$$[n]!=[1]\cdot[2] \cdot \ldots \cdot [n],\quad \text{and ~} [i]=q^i-1.$$
Rank-unimodality of $\mathcal{V}(q,n)$ is easily seen to follow from this formula.
\end{proof}

\begin{claim}\label{vqnpropq}
$\mathcal{V}(q,n)$ has Property (Q).
\end{claim}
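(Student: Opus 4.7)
The plan is to verify each of the four conditions (Q1)--(Q4) by computing $|N_r(A)|$ explicitly in terms of Gaussian binomial coefficients and then reducing every inequality to the single monotonicity fact that, for fixed $k\geq 0$, the quantity ${m\brack k}_q$ is a non-decreasing function of $m$.

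First, I would record the basic count: for a subspace $A\subseteq \mathbb{F}_q^n$ of dimension $a=|A|$, the number of subspaces of $A$ of dimension $r\leq a$ is ${a\brack r}_q={a\brack a-r}_q$, and the number of subspaces of $\mathbb{F}_q^n$ containing $A$ of dimension $r\geq a$ is ${n-a\brack r-a}_q$. In particular $|N_r(A)|$ depends only on the integers $|A|$, $r$, $n$, $q$, not on the chosen subspace $A$, which means we only have to compare numbers that depend on the ranks.

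Next, I would translate each of the four parts. Write $a=|A|$, $b=|B|$.

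\textbf{(Q1).} Since $b<a$ and $|b-n/2|<|a-n/2|$, elementary case analysis (the only non-trivial case is $b<n/2<a$) gives $a+b>n$, hence $a\geq n-b$. For $i\in\{1,\dots,a-b\}$, going up from $B$ yields $|N_{b+i}(B)|={n-b\brack i}_q$, and going down from $A$ yields $|N_{a-i}(A)|={a\brack a-i}_q={a\brack i}_q$. Monotonicity in the upper index then gives ${n-b\brack i}_q\leq {a\brack i}_q$.

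\textbf{(Q2).} Dual to (Q1): $b>a$ and $|b-n/2|<|a-n/2|$ force $a+b<n$, so $b\leq n-a$. One has $|N_{b-i}(B)|={b\brack i}_q$ and $|N_{a+i}(A)|={n-a\brack i}_q$, and monotonicity again gives the bound.

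\textbf{(Q3) and (Q4).} Here both elements lie weakly on the same side of the middle, so both neighborhoods in question are counted by going in the same direction. For (Q3), $|N_{b-i}(B)|={b\brack i}_q$ and $|N_{a-i}(A)|={a\brack i}_q$, and the inequality holds since $b<a$; (Q4) is symmetric.

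Finally, I would include a short verification of the monotonicity claim itself, which is immediate from the product formula ${m\brack k}_q=\prod_{j=0}^{k-1}\frac{q^{m-j}-1}{q^{j+1}-1}$: each factor $\frac{q^{m-j}-1}{q^{j+1}-1}$ is non-decreasing in $m$. The only real ``obstacle'' is the routine bookkeeping of verifying that the sign/position conditions in each (Q$i$) indeed yield the comparison between $a$ (or $n-a$) and $b$ (or $n-b$) needed to apply monotonicity; once those are sorted out, the proof is a one-line application of the lemma in each case, and Theorem~\ref{subspacebs} follows by combining the preceding claims with Lemma~\ref{kleitmanstatedgenerally}.
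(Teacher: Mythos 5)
Your proof is correct and follows exactly the approach the paper intends: express $|N_r(\cdot)|$ via Gaussian binomial coefficients using the two standard counts (subspaces of a given dimension below, quotient count above), and reduce each of (Q1)--(Q4) to monotonicity of ${m\brack k}_q$ in $m$. The paper states this as a one-line observation; you have simply filled in the routine case analysis (e.g.\ deducing $a+b>n$ in (Q1) and $a+b<n$ in (Q2) from the rank hypotheses), and your details check out.
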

\begin{proof}
 Properties (Q1)--(Q4) follow from the observation that if $S$ is a subspace of $\mathbb{F}_q^n$ of dimension $m$ then the number of spaces $S'\subset S$ of dimension $m-k$ is ${m \brack k}_q$ and the number of spaces $S'$ with $S\subset S'$ and $\dim(S')=m+k$ is ${n-m\brack k}_q$. 
\end{proof}

The proof of Theorem~\ref{subspacebs} now follows from putting together Lemma~\ref{kleitmanstatedgenerally} and Claims~\ref{vqnranksym}--\ref{vqnpropq}.

\section{Open problems}

Recall that $\RSU$ is the collection of posets that are rank-symmetric and rank-unimodal and let $\bC\subset \RSU$ be the collection of posets which have the centeredness property. The main open problem that this paper has only barely begun to explore asks for an easy way to decide whether a poset $P\in\RSU$ is in $\bC$. We know that $\{0,1\}^n\in\bC$ and $\mathcal{V}(q,n)\in\bC$ but for $k\geq 2$ and $n$ large we have $\{0,1,\ldots,k\}^n\in\RSU \setminus \bC$.

Now let $P_G$ be the lattice of subgroups of a finite Abelian group $G$. It was shown in~\cite{butler} that $P_G$ is rank-unimodal. The following general question is likely to be difficult to solve in full generality but any progress could be interesting.
\begin{ques}\label{ques1}
For what Abelian groups $G$ is it true that $P_G\in\bC$?
\end{ques}
\noindent
Observe that most results of this paper are special cases of Question~\ref{ques1}:
\begin{itemize}[itemsep=2pt,parsep=2pt,topsep=2pt,partopsep=2pt]
\item if $G=C_{p_1}\times C_{p_2}\times \ldots \times C_{p_n}$ for distinct primes $p_1,p_2,\ldots,p_n$ then $P_G$ is (isomorphic to) the Boolean lattice and hence $P_G\in\bC$,
\item if $G=C_{p_1^k}\times C_{p_2^k}\times \ldots \times C_{p_n^k}$ for distinct primes $p_1,p_2,\ldots,p_n$ then $P_G$ is isomorphic to the lattice $\{0,1,\ldots,k\}^n$ under inclusion and hence if $n\geq n_0(k)$ then $P_G\in\RSU \setminus \bC$.
\item if $G = (C_p)^n $ for $p$ prime then $P_G$ is isomorphic to $\mathcal{V}(p,n)$ and hence $P_G\in\bC$.
\end{itemize}

Question~\ref{ques1} can be asked for other members of $\RSU$, see e.g.~\cite{stanley}.
A natural generalization of the centeredness property is as follows. For an integer $r\geq 2$ say that a poset $P\in\RSU$ has the $r$-\emph{centeredness property} if for all $M$ with $0\leq M \leq |P|$, among all families $\F\subset P$ of size $M$, the number of $r$-chains contained in $P$ is minimized by a centered family. Denote the collection of posets with the $r$-centeredness property by $\bC_r$ and note that $\bC=\bC_2$. A long-standing conjecture in this area due to Kleitman~\cite{kleitman} is that $\{0,1\}^n\in\bC_r$ for all $n,r$. For recent progress on this conjecture we refer the reader to~\cite{baloghwagner, dasgansudakov, griggs}. Asking for a characterisation of $\bC_r$ is currently out of reach, but finding interesting necessary and/or sufficient conditions for a poset $P\in\RSU$ to be in $\bC_r$ could be a fine result.

In a different direction one could improve Theorem~\ref{mainresult} and investigate further for which $M$ Conjecture~\ref{stupidconj} holds.

\begin{ques}\label{ques2}
For which $k$ and $M$ does there exist an $M$-optimal centered family in $\{0,1,\ldots,k\}^n$?
\end{ques}
The same question can be asked for `centered' replaced by `canonical centered' (i.e.~centered families with at most one partially filled layer). We expect that for $k=2$ the answer to Question~\ref{ques2} contains the interval $[0,\Sigma_5(n,2)]$. It seems plausible that for $M\leq \Sigma_{\log_2 n}(n,k)$ the centered families are not too far from being best possible, but for much larger $M$ we do not even have a guess what the best families could be. The following question is open whenever $\sqrt{n}$ is replaced by any value between $\log_2 n$ and $n$.
\begin{ques}
Let $M=\Sigma_{\sqrt{n}}(n,2)$. What do the $M$-optimal families in $\{0,1,2\}^n$ look like?
\end{ques}

\textbf{Note added during the refereeing process:} Very recently Samotij~\cite{samotij} proved some breakthrough result related to the topic of this paper. In particular he proved that the number of $k$-chains in $\{0,1\}^n$ is minimized by centered families.

\textbf{Acknowledgement:} We are very grateful to the referee for spotting an error in the original manuscript and for the many specific and general suggestions they made.

\end{document}